\newcommand{\G}{\Gamma}
\renewcommand{\(}{\left\(}
\renewcommand{\)}{\right\)}
\renewcommand{\[}{\left\[}
\renewcommand{\]}{\right\]}
\numberwithin{equation}{section}
 \theoremstyle{plain}
\newtheorem{theorem}{Theorem}[section]
\newtheorem{lemma}[theorem]{Lemma}
\newtheorem{corollary}[theorem]{Corollary}
\def\proof{\@ifnextchar[{\@oproof}{\@nproof}}
\def\@oproof[#1][#2]{\trivlist\item[\hskip\labelsep\textit{#2 Proof of\
#1.}~]\ignorespaces}
\def\@nproof{\trivlist\item[\hskip\labelsep\textit{Proof.}~]\ignorespaces}
\newcommand*\pFq[6][8]{%
  \begingroup 
  \pFqmuskip=#1mu\relax
  \mathchardef\normalcomma=\mathcode`,
  \mathcode`\,=\string"8000
  \begingroup\lccode`\~=`\,
  \lowercase{\endgroup\let~}\pFqcomma
  {}_{#2}F_{#3}{\left[\genfrac..{0pt}{}{#4}{#5};#6\right]}%
  \endgroup
}
\newcommand{\pFqcomma}{{\normalcomma}\mskip\pFqmuskip}
\begin{document}
\title[Extensions of Watson's theorem and the Ramanujan-Guinand formula]{Extensions of Watson's theorem and the Ramanujan-Guinand formula}
\author{Rahul Kumar}\thanks{2010 \textit{Mathematics Subject Classification.} Primary 11M06, 33E20; Secondary 33C10.\\
\textit{Keywords and phrases.} Watson's lemma, Ramanujan-Guinand formula, Poisson summation formula, Generalized modified Bessel function}
\address{ Discipline of Mathematics, Indian Institute of Technology Gandhinagar, Palaj, Gandhinagar 382355, Gujarat, India}\email{rahul.kumr@iitgn.ac.in}

\maketitle

\begin{abstract}
Ramanujan provided several results involving the modified Bessel function $K_z(x)$ in his Lost Notebook. One of them is the famous Ramanujan-Guinand formula, equivalent to the functional equation of the non-holomorphic Eiesenstien series on $SL_2(z)$. Recently, this formula was generalized by Dixit, Kesarwani, and Moll. In this article, we first obtain a generalization of a theorem of Watson and, as an application of it, give a new proof of the result of Dixit, Kesarwani, and Moll. Watson's theorem is also generalized in a different direction using ${}_\mu K_z(x,\lambda)$ which is itself a generalization of $K_z(x)$. Analytic continuation of all these results are also given.
\end{abstract}

\section{Introduction}

On page 253 of his Lost Notebook \cite{rlnb}, Ramanujan recorded the following beautiful formula. 
Let $z\in\mathbb{C}$. For $\alpha,\ \beta>0$ such that $\alpha\beta=\pi^2$,
\begin{align}\label{ramgui}
&\sqrt{\alpha}\sum_{n=1}^\infty \sigma_{-z}(n)n^{\frac{z}{2}}K_{\frac{z}{2}}(2n\alpha)-\sqrt{\beta}\sum_{n=1}^\infty\sigma_{-z}(n)n^{\frac{z}{2}}K_{\frac{z}{2}}(2n\beta)\nonumber\\
&=\frac{1}{4}\Gamma\left(\frac{z}{2}\right)\zeta(z)\left\{\beta^{\frac{1-z}{2}}-\alpha^{\frac{1-z}{2}}\right\}+\frac{1}{4}\Gamma\left(-\frac{z}{2}\right)\zeta(-z)\left\{\beta^{\frac{1+z}{2}}-\alpha^{\frac{1+z}{2}}\right\},
\end{align}
where $K_z(x)$ is the modified Bessel function \cite[p.~34]{watson} and $\sigma_z(n):=\sum_{d|n}d^z$ is the generalized divisor function. 

Theorem \ref{ramgui} was rediscovered by Guinand \cite{guinand} in 1955 which is why sometimes this formula is referred to as Ramanujan-Guinand formula in the literature. Several years after Ramanujan, the special case $z=0$ of \eqref{ramgui} was also rediscovered by Koshliakov \cite{kosh}.  For a detailed history of the Ramanujan-Guinand formula \eqref{ramgui} and its implications, see \cite{bls}. 

It is well-known that \eqref{ramgui} is equivalent to the functional equation of the non-holomorphic Eisenstein series. For Re$(s)>1$ and $\tau:=x+iy,\ y>0$,  the non-holomorphic Eisenstein series $G(\tau,s)$ is defined by  
\begin{align*}
G(\tau,s):=\frac{1}{2}\sum_{\substack{{(m,n)\in\mathbb{Z}^2}\\{(m,n)\neq(0,0)}}}\frac{y^s}{|m\tau+n|^{2s}}.
\end{align*}
$G(\tau,s)$ is a non-holomorphic modular function of weight $0$ as it is invariant in $\tau$ under the usual action of SL$_2(\mathbb{Z})$. Moreover, $G(\tau,s)$ has analytic continuation in $\mathbb{C}$ except a simple pole at $s=1$ with residue $\frac{\pi}{2}$. Analytic continuation of this function can be given by its Fourier expansion
\begin{align}\label{fourier}
G(\tau,s)=\zeta(2s)y^s+\frac{\sqrt{\pi}\Gamma\left(s-\frac{1}{2}\right)}{\Gamma(s)}\zeta(2s-1)y^{1-s}+\frac{4\sqrt{y}\pi^s}{\Gamma(s)}\sum_{n=1}^\infty\sigma_{2s-1}(n)n^{\frac{1}{2}-s}K_{s-\frac{1}{2}}(2\pi ny)\cos(2\pi nx).
\end{align}
It is effortless to see that \eqref{ramgui} follows from \eqref{fourier} and by using the modular property of $G(\tau,s)$, namely, $G(\tau,s)=G\left(-\frac{1}{\tau},s\right),\ \tau\in\mathbb{H}$(upper half-plane).

Several proofs of \eqref{ramgui} have been given in the literature, for example, see \cite{bls}, \cite{dixitmontash}, and \cite{guinand}. The main ingredient in the proofs of \cite{bls} and \cite{guinand} is the following result of Watson \cite{watsonself}:
\begin{theorem}\label{watson}
For Re$(x)>0$ and Re$(z)>0$, we have
\begin{align}\label{watsonlemma}
&2\sum_{n=1}^\infty \left(\frac{nx}{2}\right)^z K_z(n x)+\frac{1}{2}\Gamma(z)-\frac{\sqrt{\pi}}{x}\Gamma\left(z+\frac{1}{2}\right)=2\sqrt{\pi}x^{2z}\Gamma\left(z+\frac{1}{2}\right)\sum_{n=1}^\infty \frac{1}{(x^2+4n^2\pi^2)^{z+\frac{1}{2}}}.
\end{align}
\end{theorem}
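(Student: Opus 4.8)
The plan is to derive \eqref{watsonlemma} from the Poisson summation formula applied to
\begin{equation*}
f(t)=\frac{1}{(x^2+4\pi^2t^2)^{z+\frac{1}{2}}}.
\end{equation*}
The point of this choice is that the lattice sum $\sum_{n\in\mathbb{Z}}f(n)$ reproduces, apart from its $n=0$ term $f(0)=x^{-(2z+1)}$, the rational series on the right of \eqref{watsonlemma}, while its Fourier transform manufactures the Bessel series on the left. I would first prove the identity for real $x>0$ and real $z>0$: there $f$ is smooth, decays like $|t|^{-(2z+1)}$ (summable since $z>0$) and has an exponentially decaying transform, so $\sum_{n\in\mathbb{Z}}f(n)=\sum_{k\in\mathbb{Z}}\widehat f(k)$ holds with no convergence subtlety. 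The stated range $\Re(x)>0,\ \Re(z)>0$ then follows by analytic continuation, both sides of \eqref{watsonlemma} being analytic there.

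The two Fourier transforms are the heart of the argument. For $k=0$ I would reduce $\widehat f(0)=2\int_0^\infty f(t)\,dt$ to a Beta integral, giving $\widehat f(0)=\frac{\Gamma(z)}{2\sqrt{\pi}\,x^{2z}\Gamma\left(z+\frac{1}{2}\right)}$. For $k\neq0$, writing $\widehat f(k)=2\int_0^\infty\cos(2\pi kt)\,f(t)\,dt$ and substituting $u=2\pi t/x$, the computation rests on the classical cosine-integral representation of the modified Bessel function,
\begin{equation*}
\int_0^\infty\frac{\cos(wu)}{(1+u^2)^{z+\frac{1}{2}}}\,du=\frac{\sqrt{\pi}\,w^z}{2^z\,\Gamma\left(z+\frac{1}{2}\right)}\,K_z(w),\qquad \Re(z)>-\tfrac{1}{2},\ w>0,
\end{equation*}
which yields $\widehat f(k)=\frac{1}{\sqrt{\pi}\,\Gamma\left(z+\frac{1}{2}\right)}\left(\frac{|k|}{2x}\right)^z K_z(|k|x)$.

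Substituting these into the Poisson identity gives
\begin{align*}
\frac{1}{x^{2z+1}}+2\sum_{n=1}^\infty\frac{1}{(x^2+4\pi^2n^2)^{z+\frac{1}{2}}}
&=\frac{\Gamma(z)}{2\sqrt{\pi}\,x^{2z}\Gamma\left(z+\frac{1}{2}\right)}\\
&\quad+\frac{2}{\sqrt{\pi}\,\Gamma\left(z+\frac{1}{2}\right)}\sum_{k=1}^\infty\left(\frac{k}{2x}\right)^z K_z(kx).
\end{align*}
The last step is purely algebraic. Multiplying through by $\sqrt{\pi}\,x^{2z}\Gamma\left(z+\frac{1}{2}\right)$ and using $x^{2z}\left(\tfrac{k}{2x}\right)^z=\left(\tfrac{kx}{2}\right)^z$ turns the Bessel term into $2\sum_{k\geq1}\left(\frac{kx}{2}\right)^z K_z(kx)$, the $f(0)$ contribution into $\tfrac{\sqrt{\pi}}{x}\Gamma\left(z+\frac{1}{2}\right)$, and the $\widehat f(0)$ contribution into $\tfrac{1}{2}\Gamma(z)$; collecting these reproduces \eqref{watsonlemma} exactly.

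The main obstacle I anticipate is the rigorous bookkeeping surrounding Poisson summation together with pinning down the exact constants. One must verify the hypotheses (absolute convergence of the lattice sum and of its transform, and enough regularity of $f$) to exchange summation and integration legitimately, and one must get every normalizing power of $x$ and $2$ in the cosine-integral representation of $K_z$ correct, since a single misplaced factor would spoil the cancellation that produces the clean form \eqref{watsonlemma}. Once the real-variable identity is established, the extension to $\Re(x)>0,\ \Re(z)>0$ by analytic continuation is routine, both series converging locally uniformly there.
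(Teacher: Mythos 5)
Your proof is correct, and it is the mirror image of the paper's argument rather than a copy of it. The paper (specializing its Theorem \ref{withkzw} at $w=0$, which is how it recovers Watson's identity) applies the Poisson summation formula to the Bessel side, $f(t)=\left(\tfrac{xt}{2}\right)^zK_{z,w}(xt)$, and its key lemma (Lemma \ref{kzwintevaluation}, reducing to Corollary \ref{cor} at $w=0$) computes the cosine transform $\int_0^\infty\left(\tfrac{xt}{2}\right)^zK_z(xt)\cos(at)\,dt=\tfrac{\sqrt{\pi}}{2}\Gamma\left(z+\tfrac12\right)x^{2z}(x^2+a^2)^{-z-\frac12}$, which manufactures the rational series on the right. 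You instead Poisson-sum the elementary function $f(t)=(x^2+4\pi^2t^2)^{-z-\frac12}$ and use Basset's integral to produce the Bessel series; since the two functions form a Fourier-cosine self-reciprocal pair (up to constants), your key formula is exactly the inverse of the paper's Corollary \ref{cor}, and I verified your constants: $\widehat f(0)=\Gamma(z)/\left(2\sqrt{\pi}x^{2z}\Gamma\left(z+\tfrac12\right)\right)$, $\widehat f(k)=\left(\tfrac{|k|}{2x}\right)^zK_z(|k|x)/\left(\sqrt{\pi}\,\Gamma\left(z+\tfrac12\right)\right)$, and the final rearrangement all check out. What your direction buys is that the hypotheses of Poisson summation become trivial: your $f$ is smooth, even, monotone on $(0,\infty)$ and integrable for $\Re(z)>0$, whereas the paper must invoke the small-argument asymptotics and exponential decay of $K_{z,w}$ to justify $f(0^+)=\tfrac12\Gamma(z)$ and the existence of $\int_0^\infty f$. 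What the paper's direction buys is generalizability: because $K_{z,w}$ has the explicit double-integral representation \eqref{intkzw}, its cosine transform can be computed directly, yielding the $w$-deformed identity (with the weight $\mathcal{A}(n,z,w,x)$) and likewise the ${}_\mu K_z$ analogue; your route would require knowing in advance the inverse cosine transform of the deformed rational side, which is not a natural starting point. One small point worth making explicit in your continuation step: for $\Re(x)>0$ with $|\arg x|\geq\pi/4$ one has $\Re(x^2)<0$ possibly, but since $|\arg(x^2)|<\pi$ and adding $4\pi^2n^2>0$ keeps $x^2+4\pi^2n^2$ off the cut $(-\infty,0]$, each term $(x^2+4\pi^2n^2)^{-z-\frac12}$ is single-valued and the series converges locally uniformly, so both sides are indeed analytic on all of $\Re(x)>0$ — this is precisely why the classical theorem holds on $\Re(x)>0$ even though the paper's generalized results are confined to $\Re(x^2)>0$.
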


In \cite{dixitmontash}, Dixit provided another proof of the Ramanujan-Guinand formula \eqref{ramgui} without invoking Theorem \ref{watson}. For the details of the proof, we refer the reader to \cite[Theorem 1.4]{dixitmontash}.

The main goal of this paper is to accomplish two different generalizations of Theorem \ref{watson} along with an application of one of the theorem. In the literature, Theorem \ref{watson} has been generalized in many directions, for example, see \cite{Berndt}, \cite{kober}. Theorem \ref{watson} has important application in Number Theory, for example, see \cite{bls} and the references therein. The series on the left-hand side of \eqref{ramgui} has played an important role in the work of Gupta and the author  \cite{guptakumar}. Moreover, very recently, Theorem \ref{watson} is employed nicely in \cite[Section 9]{dksuper} to obtain a modular relation involving the generalized Hurwitz zeta function \cite[Equation~(1.3)]{dksuper}
\begin{align*}
\zeta_w(s, a)&:=\frac{4}{w^2\sqrt{\pi}\G\left(\frac{s+1}{2}\right)}\sum_{n=1}^{\infty}\int_{0}^{\infty}\int_{0}^{\infty}\frac{(uv)^{s-1}e^{-(u^2+v^2)}\sin(wv)\sinh(wu)}{\left(n^2u^2+(a-1)^2v^2\right)^{s/2}}\, dudv,
\end{align*}
where $w\in\mathbb{C}\backslash\{0\}$, Re$(s)>1$ and $a\in\mathfrak{B}:=\{\xi:\textup{Re}(\xi)=1, \textup{Im}(\xi)\neq 0\}$. For the theory of $\zeta_w(s, a)$, we refer the reader to \cite{dksuper}.

Recently, Dixit, Kesarwani, and Moll \cite{dkmm} provided an elegant generalization of Theorem \ref{ramgui}. To state their result, we first need to define their new generalization of the modified Bessel function $K_z(x)$. For $z,w\in\mathbb{C}$, and $ x\in\mathbb{C}\backslash\{x\in\mathbb{R}:x\leq0\}$, the generalized modified Bessel function is defined by  \cite[Equation (1.3)]{dkmm}
\begin{align}\label{kzwdef}
K_{z,w}(x):&=\frac{1}{2\pi i}\int_{(c)}\Gamma\left(\frac{s-z}{2}\right)\Gamma\left(\frac{s+z}{2}\right){}_1F_1\left(\frac{s-z}{2};\frac{1}{2};\frac{-w^2}{4}\right){}_1F_1\left(\frac{s+z}{2};\frac{1}{2};\frac{-w^2}{4}\right) 2^{s-2}x^{-s}\ ds,
\end{align}
where $c:=\mathrm{Re}(s)>\left|\mathrm{Re}(z)\right|$ and ${}_1F_1(a;c;z)$ is the confluent hypergeometric function defined by \cite[p.~172, Equation (7.3)]{temme}
\begin{align*}
{}_1F_1(a;c;z):=\sum_{n=0}^\infty\frac{(a)_n}{(c)_n}\frac{z^n}{n!},\ |z|<\infty,
\end{align*}
with $(a)_m:=\Gamma(a+m)/\Gamma(a)$ for $a\in\mathbb{C}$. Here, and throughout the paper, $\int_{(c)}$ denotes the line integral $\int_{c-i\infty}^{c+i\infty}$.

It is straightforward to see that for $w=0$, $K_{z,w}(x)$ reduced to $K_{z}(x)$ by using \cite[p.~115, Formula~11.1]{ober}
\begin{align*}
K_z(x)=\frac{1}{2\pi i}\int_{(c)}\Gamma\left(\frac{s-z}{2}\right)\Gamma\left(\frac{s+z}{2}\right)2^{s-2}x^{-s}\ ds.
\end{align*}
For more details on the theory of $K_{z,w}(x)$ and its connection in Analytic Number Theory and Physics, we refer the reader to \cite{dkmm} and \cite{rk}.

The aforementioned generalization of the Ramanujan-Guinand formula is given in the following theorem \cite[Theorem 1.4]{dkmm}.
\begin{theorem}\label{dkm}
Let $z, w\in\mathbb{C}$. Let $K_{z,w}(x)$ be defined in \eqref{kzwdef}. For $\alpha, \beta>0$ such that $\alpha\beta=\pi^2$,
\begin{align}\label{dkmeqn}
&\sqrt{\alpha}\sum_{n=1}^\infty \sigma_{-z}(n)n^{\frac{z}{2}}e^{-\frac{w^2}{4}}K_{\frac{z}{2},iw}(2n\alpha)-\sqrt{\beta}\sum_{n=1}^\infty\sigma_{-z}(n)n^{\frac{z}{2}}e^{\frac{w^2}{4}}K_{\frac{z}{2},w}(2n\beta)\nonumber\\
&=\frac{1}{4}\Gamma\left(\frac{z}{2}\right)\zeta(z)\left\{\beta^{\frac{1-z}{2}}{}_1F_1\left(\frac{1-z}{2};\frac{1}{2};\frac{w^2}{4}\right)-\alpha^{\frac{1-z}{2}}{}_1F_1\left(\frac{1-z}{2};\frac{1}{2};-\frac{w^2}{4}\right)\right\}\nonumber\\
&\quad+\frac{1}{4}\Gamma\left(-\frac{z}{2}\right)\zeta(-z)\left\{\beta^{\frac{1+z}{2}}{}_1F_1\left(\frac{1+z}{2};\frac{1}{2};\frac{w^2}{4}\right)-\alpha^{\frac{1+z}{2}}{}_1F_1\left(\frac{1+z}{2};\frac{1}{2};-\frac{w^2}{4}\right)\right\}.
\end{align}
\end{theorem}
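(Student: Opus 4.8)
The plan is to mimic the classical passage from Watson's Theorem~\ref{watson} to the Ramanujan--Guinand formula \eqref{ramgui}, but feeding in our generalization of Theorem~\ref{watson} (the version carrying $K_{z,w}$ and ${}_1F_1$-factors) in place of \eqref{watsonlemma}. First I would open up the two divisor sums on the left of \eqref{dkmeqn}: writing $n=de$ and using $\sigma_{-z}(n)\,n^{z/2}=\sum_{de=n}d^{-z/2}e^{z/2}$, the $\alpha$-series becomes the double sum
\begin{align*}
\sqrt{\alpha}\,e^{-w^2/4}\sum_{d=1}^{\infty}\sum_{e=1}^{\infty}d^{-z/2}e^{z/2}K_{\frac z2,iw}(2de\alpha),
\end{align*}
and similarly for the $\beta$-series with $e^{w^2/4}K_{\frac z2,w}(2de\beta)$. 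For $\mathrm{Re}(z)>0$ and $\alpha,\beta>0$ the exponential decay of the generalized Bessel function makes these sums absolutely convergent, so I may fix $d$ and read the inner sum over $e$ as a single Bessel sum in the variable $x=2d\alpha$ (resp.\ $x=2d\beta$) with Bessel parameter $z/2$.

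Next I would apply our generalized Watson theorem to each inner sum. As in \eqref{watsonlemma}, it resolves the inner Bessel sum into \emph{elementary} terms --- each a product of a $\Gamma$-value, a power of $x$, and a ${}_1F_1$ --- together with an Epstein-type series $\sum_{n\ge1}(x^2+4n^2\pi^2)^{-(z+1)/2}\,{}_1F_1(\cdots)$. Setting $x=2d\alpha$, multiplying by $d^{-z/2}$ and summing on $d$, the relation $\alpha\beta=\pi^2$ gives $4d^2\alpha^2+4n^2\pi^2=4\alpha(d^2\alpha+n^2\beta)$, and the accompanying powers of $\alpha$, the factor $\sqrt{\alpha}$, and $d^{-z/2}$ cancel exactly, leaving the Epstein part as a constant multiple of
\begin{align*}
\sum_{d=1}^{\infty}\sum_{n=1}^{\infty}\frac{{}_1F_1(\cdots)}{(d^2\alpha+n^2\beta)^{(z+1)/2}}.
\end{align*}
This expression is invariant under the simultaneous swap $(\alpha,d)\leftrightarrow(\beta,n)$, so the Epstein contributions of the $\alpha$- and $\beta$-series coincide and cancel in the difference $\sqrt{\alpha}(\cdots)-\sqrt{\beta}(\cdots)$ --- provided the two ${}_1F_1$ factors (one built from $iw$, the other from $w$) also match under this swap, which is the one genuinely delicate point.

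What remains is the difference of the elementary terms. One pair reproduces at once the first line of \eqref{dkmeqn},
\begin{align*}
\frac14\Gamma\!\left(\tfrac z2\right)\zeta(z)\left\{\beta^{\frac{1-z}{2}}{}_1F_1\!\left(\tfrac{1-z}{2};\tfrac12;\tfrac{w^2}{4}\right)-\alpha^{\frac{1-z}{2}}{}_1F_1\!\left(\tfrac{1-z}{2};\tfrac12;-\tfrac{w^2}{4}\right)\right\}.
\end{align*}
The other pair appears first as a multiple of $\sqrt{\pi}\,\Gamma\!\left(\tfrac{z+1}{2}\right)\zeta(z+1)$ times a ${}_1F_1\!\left(\tfrac{1+z}{2};\tfrac12;\pm\tfrac{w^2}{4}\right)$ and a power $\alpha^{-(z+1)/2}$ (resp.\ $\beta^{-(z+1)/2}$); here I would use $\pi^2=\alpha\beta$ to rewrite $\alpha^{-(z+1)/2}-\beta^{-(z+1)/2}=\pi^{-(z+1)}\{\beta^{(1+z)/2}-\alpha^{(1+z)/2}\}$ and then the functional equation $\pi^{-s/2}\Gamma(s/2)\zeta(s)=\pi^{-(1-s)/2}\Gamma\!\left(\tfrac{1-s}{2}\right)\zeta(1-s)$ at $s=z+1$ to convert $\pi^{-(z+1)}\sqrt{\pi}\,\Gamma\!\left(\tfrac{z+1}{2}\right)\zeta(z+1)$ into $\Gamma\!\left(-\tfrac z2\right)\zeta(-z)$, yielding the second line of \eqref{dkmeqn}. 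Aligning the hypergeometric arguments at this step is where Kummer's transformation ${}_1F_1(a;c;x)=e^{x}{}_1F_1(c-a;c;-x)$ and the normalizations $e^{\mp w^2/4}$ on the left of \eqref{dkmeqn} come into play.

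I expect the main obstacle to be exactly this tracking of the confluent hypergeometric factors and exponential normalizations: checking that the ${}_1F_1$'s emerging from the generalized Watson theorem are symmetric enough for the Epstein sums to cancel, and that the surviving ${}_1F_1$'s line up after the functional equation so that the elementary terms assemble into precisely the right-hand side of \eqref{dkmeqn}. All of this is carried out for $\mathrm{Re}(z)>0$, the hypothesis of Watson's theorem; since the left side of \eqref{dkmeqn} is entire in $z$ (its Bessel series converging locally uniformly in $z$), the identity extends to all $z\in\mathbb{C}$ by analytic continuation, the apparent poles of the $\Gamma$-factors on the right cancelling between the two lines.
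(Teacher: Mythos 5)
Your proposal is correct and takes essentially the same route as the paper: open the divisor sums, apply the generalized Watson theorem (Theorem \ref{withkzw}) to the inner sum with $x=2d\alpha$ and $w$ replaced by $iw$, cancel the double Epstein-type sums via the swap $(\alpha,d)\leftrightarrow(\beta,n)$ under $\alpha\beta=\pi^2$, and finish with Kummer's transformation \eqref{kummar} and the functional equation of $\zeta(s)$ at $s=z+1$. The one step you flag but do not carry out --- that the confluent hypergeometric factors match under the swap --- is precisely the paper's identity $\mathcal{A}\left(n,\tfrac{z}{2},iw,2d\alpha\right)=\mathcal{A}\left(d,\tfrac{z}{2},w,2n\beta\right)$ proved in \eqref{Asymm} by substituting $\alpha=\pi^2/\beta$ into the arguments of the ${}_1F_1$'s, and your closing analytic continuation in $z$ from $\mathrm{Re}(z)>1$ to all of $\mathbb{C}$ makes explicit a point the paper leaves implicit.
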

Here we note that authors of \cite{dkmm} did not follow the approach used in \cite{bls} or \cite{guinand} to prove Theorem \ref{dkm}. They used the theory of functions reciprocal in a certain kernel.  To follow the same approach as in \cite{bls} or \cite{guinand}, one needs to first obtain generalization of the Watson's result \eqref{watsonlemma} which has been missing from the literature. In this work, we fill this gap, that is, we obtain a generalization of the Watson's result \eqref{watsonlemma}. Further as an application of it, we provide a new proof of Theorem \ref{dkm}. Our generalization of \eqref{watsonlemma} is recorded in the following theorem.
\begin{theorem}\label{withkzw}
Let $\mathrm{Re}(x^2)>0$. Define
\begin{align}\label{Ax}
\mathcal{A}(n,z,w,x):={}_1F_1\left(\frac{1}{2}+z;\frac{1}{2};\frac{w^2(2n\pi+ix)}{8n\pi-4ix}\right)+{}_1F_1\left(\frac{1}{2}+z;\frac{1}{2};\frac{w^2(2n\pi-ix)}{8n\pi+4ix}\right).
\end{align}
Let $w\in\mathbb{C}$ and $\mathrm{Re}(z)>0$. Let $K_{z,w}(x)$ be defined in \eqref{kzwdef}. Then
\begin{align}\label{withkzweqn}
&2\sum_{n=1}^\infty\left(\frac{nx}{2}\right)^zK_{z,w}(nx)+\frac{1}{2}\Gamma(z){}_1F_1\left(z;\frac{1}{2};-\frac{w^2}{4}\right)-\frac{\sqrt{\pi}}{x}\Gamma\left(\frac{1}{2}+z\right)e^{-\frac{w^2}{4}}{}_1F_1\left(\frac{1}{2}+z;\frac{1}{2};-\frac{w^2}{4}\right)\nonumber\\
&=\sqrt{\pi}x^{2z}\Gamma\left(\frac{1}{2}+z\right)e^{-\frac{w^2}{4}}\sum_{n=1}^\infty\frac{\mathcal{A}(n,z,w,x)}{(x^2+4n^2\pi^2)^{z+\frac{1}{2}}}.
\end{align}
\end{theorem}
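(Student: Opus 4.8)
The plan is to generalize the classical proof of Watson's theorem (\`eqref{watsonlemma}) by working directly with the Mellin--Barnes representation \eqref{kzwdef} of $K_{z,w}(x)$. Substituting \eqref{kzwdef} into the series $\sum_{n=1}^\infty (nx/2)^z K_{z,w}(nx)$, I would interchange summation and integration (justified by absolute convergence for $\mathrm{Re}(s)$ large enough and $\mathrm{Re}(z)>0$), producing a Dirichlet series $\sum_{n=1}^\infty n^{z-s}=\zeta(s-z)$ inside the integral. This converts the left side into a single Mellin--Barnes integral
\begin{align*}
\frac{1}{2\pi i}\int_{(c)}\Gamma\!\left(\tfrac{s-z}{2}\right)\Gamma\!\left(\tfrac{s+z}{2}\right){}_1F_1\!\left(\tfrac{s-z}{2};\tfrac12;-\tfrac{w^2}{4}\right){}_1F_1\!\left(\tfrac{s+z}{2};\tfrac12;-\tfrac{w^2}{4}\right)\zeta(s-z)\,2^{s-2-z}x^{z-s}\,ds,
\end{align*}
with $c>1+\mathrm{Re}(z)$. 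The strategy is then to shift the contour to the left across the poles, collect residues, and identify the resulting sum with the right-hand side of \eqref{withkzweqn} after a second contour manipulation.

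\emph{First} I would locate the poles encountered when moving the line of integration leftward. The factor $\zeta(s-z)$ contributes a simple pole at $s=1+z$; the gamma factor $\Gamma((s-z)/2)$ contributes poles at $s=z,\,z-2,\dots$, while $\Gamma((s+z)/2)$ gives poles at $s=-z,\,-z-2,\dots$. The pole at $s=z$ (where $\zeta(s-z)=\zeta(0)=-\tfrac12$) should produce the term $\tfrac12\Gamma(z)\,{}_1F_1(z;\tfrac12;-w^2/4)$, and the pole at $s=1+z$ should yield the term $-\tfrac{\sqrt\pi}{x}\Gamma(\tfrac12+z)e^{-w^2/4}\,{}_1F_1(\tfrac12+z;\tfrac12;-w^2/4)$; here I would use the Kummer transformation ${}_1F_1(a;\tfrac12;-w^2/4)=e^{-w^2/4}{}_1F_1(\tfrac12-a;\tfrac12;w^2/4)$ together with the duplication/reflection identities for $\Gamma$ to match the stated constants exactly.

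\emph{To produce the right-hand side}, after passing the contour to $\mathrm{Re}(s)=c'<-\mathrm{Re}(z)$ (or to a suitable vertical line), I would invoke the functional equation of $\zeta$ together with an inverse Mellin transform to re-express the shifted integral as a Bessel-type series in $1/(x^2+4n^2\pi^2)^{z+1/2}$. The function $\mathcal{A}(n,z,w,x)$ in \eqref{Ax}, a symmetric pair of ${}_1F_1$'s evaluated at the conjugate arguments $w^2(2n\pi\pm ix)/(8n\pi\mp 4ix)$, is exactly what arises from writing $(x^2+4n^2\pi^2)=(x+2in\pi)(x-2in\pi)$ and applying Kummer's transformation to each confluent hypergeometric factor; the two summands of $\mathcal{A}$ are the contributions of the two complex conjugate factors. \emph{The hardest part} will be this final identification: tracking the ${}_1F_1$ arguments through the reflection of $\zeta$ and verifying, via the Gauss--Kummer quadratic or Kummer's first transformation, that the residual integral collapses to the stated series with precisely the argument $w^2(2n\pi\pm ix)/(8n\pi\mp 4ix)$ rather than a more complicated hypergeometric expression. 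I would also need to justify that the shifted contour integral vanishes (so no remainder term survives), using Stirling bounds on the gamma factors and the known polynomial growth of ${}_1F_1$ and $\zeta$ along vertical lines; the requirement $\mathrm{Re}(x^2)>0$ is what guarantees the requisite decay of $x^{-s}$ and the convergence of the final series.
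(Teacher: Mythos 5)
Your Mellin--Barnes strategy is genuinely different from the paper's proof, which never touches the contour-integral representation \eqref{kzwdef}: the paper applies the Poisson summation formula (Theorem \ref{poisson}) to $f(t)=\left(\frac{xt}{2}\right)^z K_{z,w}(xt)$ and reduces everything to a new cosine-transform evaluation (Lemma \ref{kzwintevaluation}), proved from the real integral representation \eqref{intkzw} and the Gaussian integral \eqref{grad}. Your residue bookkeeping is correct as far as it goes: the pole of $\zeta(s-z)$ at $s=1+z$ does yield $\frac{\sqrt{\pi}}{x}\Gamma\left(\frac{1}{2}+z\right)e^{-w^2/4}{}_1F_1\left(\frac{1}{2}+z;\frac{1}{2};-\frac{w^2}{4}\right)$ (no Kummer transformation is needed there, since ${}_1F_1\left(\frac{1}{2};\frac{1}{2};-\frac{w^2}{4}\right)=e^{-w^2/4}$ directly), and the pole of $\Gamma\left(\frac{s-z}{2}\right)$ at $s=z$ together with $\zeta(0)=-\frac{1}{2}$ yields $\frac{1}{2}\Gamma(z)\,{}_1F_1\left(z;\frac{1}{2};-\frac{w^2}{4}\right)$.

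However, there are two genuine gaps. First, the contour placement: shifting to $\mathrm{Re}(s)=c'<-\mathrm{Re}(z)$ crosses the pole of $\Gamma\left(\frac{s+z}{2}\right)$ at $s=-z$ (and, when $\mathrm{Re}(z)>1$, poles of $\Gamma\left(\frac{s-z}{2}\right)$ at $s=z-2,\dots$), producing residue terms that have no counterpart in \eqref{withkzweqn} and do not cancel; the functional-equation step must instead be performed on a line in the strip $\max\left(-\mathrm{Re}(z),\mathrm{Re}(z)-2\right)<\mathrm{Re}(s)<\mathrm{Re}(z)$, where the Dirichlet series $\zeta(1+z-s)=\sum_n n^{-(1+z-s)}$ converges, and nothing ``vanishes'' --- the shifted integral is transformed term by term, not discarded (pushing the contour to $-\infty$ instead generates a divergent asymptotic series from the gamma poles). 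Second, and decisively: the assertion that the residual integral ``collapses'' to $\sum_n \mathcal{A}(n,z,w,x)\,(x^2+4n^2\pi^2)^{-z-\frac{1}{2}}$ is exactly the hard content of the theorem --- it is equivalent to Lemma \ref{kzwintevaluation} --- and your proposal supplies no mechanism for it. After the functional equation you face, for each $n$, a Mellin--Barnes integral whose integrand contains two gamma factors, two ${}_1F_1$'s with parameter depending on $s$, and $\Gamma(1+z-s)\sin\left(\frac{\pi(s-z)}{2}\right)$; this is in no standard table, and factoring $x^2+4n^2\pi^2=(x+2in\pi)(x-2in\pi)$ plus Kummer's transformation does not by itself produce the arguments in \eqref{Ax}. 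In the paper those arguments arise concretely from \eqref{grad} with $\beta=1+\frac{a^2}{x^2}$ and $\nu=w\left(1\pm\frac{ia}{x}\right)$, giving $\frac{\nu^2}{4\beta}=\frac{w^2(a\mp ix)}{4(a\pm ix)}$ with $a=2n\pi$. A minor further slip: ${}_1F_1\left(\frac{s\pm z}{2};\frac{1}{2};-\frac{w^2}{4}\right)$ does not have polynomial growth on vertical lines --- it grows like $e^{c\sqrt{|s|}}$ in $|\mathrm{Im}(s)|$ (cf.\ the asymptotics in \cite{dkmm}) --- although the exponential decay of the gamma factors still rescues convergence for $|\arg x|<\frac{\pi}{4}$, which is the true role of the hypothesis $\mathrm{Re}(x^2)>0$. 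Until the per-$n$ integral evaluation is actually carried out (or replaced by the paper's Lemma \ref{kzwintevaluation}), the proposal does not constitute a proof.
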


We now obtain an extended version of Theorem \ref{withkzw} in which the restriction on $z$ is removed.
\begin{theorem}\label{continuationwithkzw}
Let $w\in\mathbb{C}$, $\mathrm{Re}(x^2)>0$, and $\mathcal{A}(n,z,w,x)$ be defined in \eqref{Ax}. Let $M>0$ be an integer. Then for $\mathrm{Re}(z)>-M$, 
{\allowdisplaybreaks\begin{align}\label{continuationwithkzweqn}
&2\sum_{n=1}^\infty\left(\frac{nx}{2}\right)^zK_{z,w}(nx)+\frac{1}{2}\Gamma(z){}_1F_1\left(z;\frac{1}{2};-\frac{w^2}{4}\right)-\frac{\sqrt{\pi}}{x}\Gamma\left(\frac{1}{2}+z\right)e^{-\frac{w^2}{4}}{}_1F_1\left(\frac{1}{2}+z;\frac{1}{2};-\frac{w^2}{4}\right)\nonumber\\
&=\sqrt{\pi}x^{2z}\Gamma\left(\frac{1}{2}+z\right)e^{-\frac{w^2}{4}}\Bigg[\sum_{n=1}^\infty \mathcal{A}(n,z,w,x)\Bigg\{\frac{1}{(x^2+4n^2\pi^2)^{z+\frac{1}{2}}}-\sum_{m=0}^{M-1}\binom{-z-\frac{1}{2}}{m}\frac{x^{2m}}{(2n\pi)^{2z+2m+1}}\Bigg\}\nonumber\\
&\qquad+\sum_{m=0}^{M-1}\binom{-z-\frac{1}{2}}{m}\frac{x^{2m}}{(2\pi)^{2z+2m+1}}\sum_{n=1}^\infty \frac{\mathcal{A}(n,z,w,x)}{n^{2z+2m+1}}\Bigg].
\end{align}}
\end{theorem}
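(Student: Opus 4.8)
The plan is to derive \eqref{continuationwithkzweqn} from Theorem~\ref{withkzw} by rearranging the series on the right-hand side of \eqref{withkzweqn} and then extending the resulting identity in $z$ by analytic continuation. Throughout I fix $w$ and $x$ with $\mathrm{Re}(x^2)>0$ and view each side as a function of $z$.

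First, for $\mathrm{Re}(z)>0$, I would insert into the series on the right of \eqref{withkzweqn} the purely algebraic identity
\begin{align*}
\frac{1}{(x^2+4n^2\pi^2)^{z+\frac12}}
&=\left[\frac{1}{(x^2+4n^2\pi^2)^{z+\frac12}}-\sum_{m=0}^{M-1}\binom{-z-\frac12}{m}\frac{x^{2m}}{(2n\pi)^{2z+2m+1}}\right]\\
&\quad+\sum_{m=0}^{M-1}\binom{-z-\frac12}{m}\frac{x^{2m}}{(2n\pi)^{2z+2m+1}},
\end{align*}
which simply adds and subtracts the first $M$ terms of the binomial expansion of $(x^2+4n^2\pi^2)^{-z-\frac12}=(2n\pi)^{-2z-1}\bigl(1+x^2/(4n^2\pi^2)\bigr)^{-z-\frac12}$. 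For $\mathrm{Re}(z)>0$ the two series produced by this split converge absolutely — the bracketed terms are $O(n^{-2\mathrm{Re}(z)-2M-1})$ and the terms $\mathcal{A}(n,z,w,x)/n^{2z+2m+1}$ are $O(n^{-2\mathrm{Re}(z)-2m-1})$, while $\mathcal{A}(n,z,w,x)=O(1)$ — so the rearrangement is legitimate and yields exactly \eqref{continuationwithkzweqn} on the half-plane $\mathrm{Re}(z)>0$.

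Next I would show that both sides of \eqref{continuationwithkzweqn} are meromorphic on $\mathrm{Re}(z)>-M$. The left side is, up to the explicit $\Gamma$-factors (whose poles lie at $z=0,-1,-2,\dots$ and $z=-\tfrac12,-\tfrac32,\dots$), the entire-in-$z$ Bessel series, hence meromorphic on all of $\mathbb{C}$. For the right side, the first series is the decisive gain: for $n>|x|/(2\pi)$ the bracket is the tail of a convergent binomial series and is therefore $O(n^{-2\mathrm{Re}(z)-2M-1})$ uniformly on compact $z$-sets, while the finitely many remaining terms are entire in $z$; since $\mathcal{A}(n,z,w,x)=O(1)$, the first series converges locally uniformly on $\mathrm{Re}(z)>-M$ and defines a holomorphic function there. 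For the double sum I would produce the asymptotic expansion of $\mathcal{A}(n,z,w,x)$: writing the two arguments in \eqref{Ax} as $\tfrac{w^2}{4}\cdot\tfrac{1\pm r_n}{1\mp r_n}$ with $r_n=ix/(2n\pi)$ and Taylor-expanding (the odd powers of $r_n$ cancelling when the two ${}_1F_1$'s are added) gives $\mathcal{A}(n,z,w,x)=\sum_{j=0}^{M-1}a_j(z,w,x)\,n^{-2j}+O(n^{-2M})$ with $a_0=2\,{}_1F_1\bigl(\tfrac12+z;\tfrac12;\tfrac{w^2}{4}\bigr)$. Substituting this into $\sum_{n}\mathcal{A}(n,z,w,x)/n^{2z+2m+1}$ expresses each such Dirichlet series as a finite combination of $\zeta(2z+2m+2j+1)$ plus a series converging for $\mathrm{Re}(z)>-M$, thereby continuing it meromorphically past its abscissa of convergence $\mathrm{Re}(z)=-m$. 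With both sides now meromorphic on $\mathrm{Re}(z)>-M$ and agreeing on $\mathrm{Re}(z)>0$, the identity theorem gives \eqref{continuationwithkzweqn} throughout $\mathrm{Re}(z)>-M$.

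The main obstacle is the pair of analyticity statements underlying this continuation: the uniform binomial-remainder estimate that upgrades the first series from conditional convergence on $\mathrm{Re}(z)>0$ to local-uniform (hence holomorphic) convergence on the enlarged half-plane, and the asymptotic expansion of $\mathcal{A}(n,z,w,x)$ to order $n^{-2M}$ needed to continue $\sum_n\mathcal{A}(n,z,w,x)/n^{2z+2m+1}$ for the small values of $m$, where the series genuinely diverges once $\mathrm{Re}(z)\le -m$. Once these are in place, the matching of the residues of the two sides at $z=0,-1,\dots,-(M-1)$ — which on the left come from $\tfrac12\Gamma(z){}_1F_1(z;\tfrac12;-w^2/4)$ and on the right from the poles of the continued Dirichlet series — is automatic from the identity theorem and need not be verified by hand.
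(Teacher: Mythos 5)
Your proposal is correct, and its skeleton coincides with the paper's own proof: the paper performs exactly your add-and-subtract of the first $M$ binomial terms (equations \eqref{boundonrational} and \eqref{abound}), substitutes into \eqref{withkzweqn} to obtain the identity \eqref{done} for $\mathrm{Re}(z)>0$, and then argues that each side is analytic on $\mathrm{Re}(z)>-M$ (locally uniform convergence plus Weierstrass' theorem) before concluding by the principle of analytic continuation. Where you go genuinely beyond the paper is in the treatment of the inner Dirichlet series $\sum_{n\geq 1}\mathcal{A}(n,z,w,x)\,n^{-2z-2m-1}$. Since $\mathcal{A}(n,z,w,x)\to 2\,{}_1F_1\left(\tfrac{1}{2}+z;\tfrac{1}{2};\tfrac{w^2}{4}\right)$ as $n\to\infty$, this series diverges once $\mathrm{Re}(z)\leq -m$, so for the small values of $m$ it is not covered by the paper's blanket assertion that ``the series on the right-hand side of \eqref{done} converges uniformly''; the paper never says in what sense the $m$-sum in \eqref{continuationwithkzweqn} is to be read on $-M<\mathrm{Re}(z)\leq 0$ (in the case $w=0$ it silently becomes the continued $\zeta(2z+2m+1)$ in Corollary \ref{spwatson}). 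Your expansion of $\mathcal{A}$ as $F(r_n)+F(-r_n)$ with $r_n=ix/(2\pi n)$ — even in $r_n$, hence $\mathcal{A}(n,z,w,x)=\sum_{j=0}^{M-1}a_j(z,w,x)n^{-2j}+O(n^{-2M})$ — reduces each such Dirichlet series to a finite combination of $\zeta(2z+2m+2j+1)$ plus a series holomorphic for $\mathrm{Re}(z)>-m-M$, and this is precisely the missing ingredient that makes the right-hand side meromorphic on $\mathrm{Re}(z)>-M$ and legitimizes the identity-theorem step. A further small point in your favour: the paper's two convergence paragraphs appear to have their sides interchanged (it cites the asymptotics of $K_{z,w}$ for the right-hand side and the estimate \eqref{bigoh} for the left-hand side), whereas your assignment of the estimates — exponential decay of $K_{z,w}$ for the Bessel series, the binomial-remainder bound for the bracketed series — is the correct one. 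In short: same route as the paper, but your version supplies the rigor the paper elides.
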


As a special case of Theorem \ref{continuationwithkzw}, we get the result of Watson \cite[p.~300, Section~3]{watsonself}:
\begin{corollary}\label{spwatson}
Let $\mathrm{Re}(x^2)>0$. Let $M>0$ be an integer. Then for $\mathrm{Re}(z)>-M$, 
{\allowdisplaybreaks\begin{align}\label{spwatsoneqn}
&2\sum_{n=1}^\infty\left(\frac{nx}{2}\right)^zK_{z}(nx)+\frac{1}{2}\Gamma(z)-\frac{\sqrt{\pi}}{x}\Gamma\left(\frac{1}{2}+z\right)\nonumber\\
&=2\sqrt{\pi}x^{2z}\Gamma\left(\frac{1}{2}+z\right)\Bigg[\sum_{n=1}^\infty \Bigg\{\frac{1}{(x^2+4n^2\pi^2)^{z+\frac{1}{2}}}-\sum_{m=0}^{M-1}\binom{-z-\frac{1}{2}}{m}\frac{x^{2m}}{(2n\pi)^{2z+2m+1}}\Bigg\}\nonumber\\
&\qquad+\sum_{m=0}^{M-1}\binom{-z-\frac{1}{2}}{m}\frac{x^{2m}\zeta(2z+2m+1)}{(2\pi)^{2z+2m+1}}\Bigg].
\end{align}}
\end{corollary}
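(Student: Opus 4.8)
The plan is to obtain Corollary \ref{spwatson} directly as the specialization $w=0$ of Theorem \ref{continuationwithkzw}, so no independent argument is needed beyond tracking how each ingredient degenerates. First I would record the three elementary facts that trivialize the substitution: since the confluent hypergeometric series has constant term $1$, one has ${}_1F_1(a;c;0)=1$ for every admissible $a,c$; the excerpt already notes that $K_{z,0}(x)=K_z(x)$; and $e^{-w^2/4}\big|_{w=0}=1$. In particular, evaluating \eqref{Ax} at $w=0$ gives
\begin{align*}
\mathcal{A}(n,z,0,x)={}_1F_1\left(\tfrac12+z;\tfrac12;0\right)+{}_1F_1\left(\tfrac12+z;\tfrac12;0\right)=2,
\end{align*}
a constant independent of $n$.

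Next I would substitute $w=0$ into \eqref{continuationwithkzweqn} term by term. On the left-hand side the two factors ${}_1F_1(z;\tfrac12;-w^2/4)$ and ${}_1F_1(\tfrac12+z;\tfrac12;-w^2/4)$ both become $1$ and $K_{z,w}$ becomes $K_z$, reproducing verbatim the left-hand side of \eqref{spwatsoneqn}. On the right-hand side the prefactor $e^{-w^2/4}$ becomes $1$, while every occurrence of $\mathcal{A}(n,z,0,x)$ equals $2$; pulling this constant out of both interior sums converts the outer factor $\sqrt{\pi}x^{2z}\Gamma(\tfrac12+z)$ into $2\sqrt{\pi}x^{2z}\Gamma(\tfrac12+z)$, matching the prefactor in \eqref{spwatsoneqn}.

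The only remaining step is to identify the inner Dirichlet series. With $\mathcal{A}\equiv 2$, the double sum in the second line of \eqref{continuationwithkzweqn} collapses to $\sum_{m=0}^{M-1}\binom{-z-\frac12}{m}\frac{x^{2m}}{(2\pi)^{2z+2m+1}}\sum_{n=1}^\infty n^{-(2z+2m+1)}$, and recognizing $\sum_{n=1}^\infty n^{-(2z+2m+1)}=\zeta(2z+2m+1)$ produces exactly the $\zeta$-term of \eqref{spwatsoneqn}. There is essentially no obstacle here: the computation is a routine degeneration. The one point meriting a remark is that, since we only assume $\mathrm{Re}(z)>-M$, the series $\sum_n n^{-(2z+2m+1)}$ need not converge for the smaller values of $m$; but because $\mathcal{A}(n,z,0,x)=2$ carries no $n$-dependence, this series is to be read as the meromorphically continued Riemann zeta function, which is precisely the interpretation already in force for the corresponding sum in Theorem \ref{continuationwithkzw}. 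Assembling the two simplified sides then yields \eqref{spwatsoneqn}.
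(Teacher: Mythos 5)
Your proposal is correct and follows exactly the paper's own route: the paper proves Corollary \ref{spwatson} by setting $w=0$ in Theorem \ref{continuationwithkzw} and using $K_{z,0}(x)=K_z(x)$ together with $\mathcal{A}(n,z,0,x)=2$, which is precisely your argument spelled out in more detail. Your added remark that $\sum_{n\ge1}n^{-(2z+2m+1)}$ must be read as the analytically continued $\zeta(2z+2m+1)$ for small $m$ when $\mathrm{Re}(z)$ is near $-M$ is a sound clarification of a point the paper leaves implicit.
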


Next, we present a generalization of Theorem \ref{watson} in another direction, different from Theorem \ref{withkzw}. Very recently, a new two-parameter generalization of $K_z(x)$ was introduced by Dixit, Kesarwani and the author in \cite{dkk}. For $z\in\mathbb{C}\backslash\left(\mathbb{Z}\backslash\{0\}\right)$, and $x, \mu,\lambda\in\mathbb{C}$ such that $\mu+\lambda\neq-\frac{1}{2}, -\frac{3}{2}, -\frac{5}{2},\cdots$, by \cite[Equation (1.16)]{dkk}, it is given by
\begin{align}\label{muknuwdef}
{}_{\mu}K_{z}(x, \lambda)&:=\frac{\pi x^\lambda 2^{\mu+z-1}}{\sin(z\pi)}\Bigg\{\left(\frac{x}{2}\right)^{-z}\frac{\Gamma(\mu+\lambda+\tfrac{1}{2})}{\Gamma(1-z)\Gamma(\lambda+\tfrac{1}{2}-z)}\pFq12{\mu+\lambda+\tfrac{1}{2}}{\lambda+\tfrac{1}{2}-z,1-z}{\frac{x^2}{4}}\nonumber\\
&\quad\quad\quad\quad\quad\quad-\left(\frac{x}{2}\right)^{z}\frac{\Gamma(\mu+z+\lambda+\tfrac{1}{2})}{\Gamma(1+z)\Gamma(\lambda+\tfrac{1}{2})}\pFq12{\mu+z+\lambda+\tfrac{1}{2}}{\lambda+\tfrac{1}{2},1+z}{\frac{x^2}{4}}\Bigg\},
\end{align}
with ${}_{\mu}K_{0}(x, \lambda)=\lim_{z\to0}{}_{\mu}K_{z}(x, \lambda)$.

From \cite[equation (1.17)]{dkk}, we have 
\begin{equation*}
{}_{-z}K_{z}(x, \lambda)=x^{\lambda}K_{z}(x).
\end{equation*}
Therefore, it is obvious to see that if $\mu=-z$ and $\lambda=0$ then ${}_{\mu}K_{z}(x, \lambda)$ reduces to $K_z(x)$. For more information on ${}_{\mu}K_{z}(x, \lambda)$ and its number theoretic applications, we refer the reader to \cite{dkk}.

Our second generalization of the Watson's result \eqref{watsonlemma} is given below.
\begin{theorem}\label{withmuknu}
Let $\mathrm{Re}(x)>0$, $\mathrm{Re}(z)>0$ and $\mathrm{Re}(\mu+\lambda)>0$. Let ${}_\mu K_{z}(x,\lambda)$ be defined in \eqref{muknuwdef}. Then 
\begin{align}\label{withmuknueqn}
&2\sum_{n=1}^\infty\left(\frac{nx}{2}\right)^{z-\lambda}{}_{\mu}K_{z}(nx,\lambda)+\frac{2^{z+\mu+\lambda-1}\Gamma(z)\Gamma\left(\frac{1}{2}+\lambda+\mu\right)}{\Gamma\left(\frac{1}{2}+\lambda-z\right)}-\frac{\sqrt{\pi}2^{\mu+z+\lambda}\Gamma(\lambda+\mu)\Gamma\left(\frac{1}{2}+z\right)}{x\Gamma(\lambda-z)}\nonumber\\
&=\frac{2^{\mu+\lambda-z}}{\sqrt{\pi}}\left(\frac{x}{\pi}\right)^{2z}\frac{\Gamma\left(z+\frac{1}{2}\right)\Gamma\left(\frac{1}{2}+\lambda+\mu+z\right)}{\Gamma\left(\frac{1}{2}+\lambda\right)}\sum_{n=1}^\infty\frac{1}{n^{2z+1}}\pFq{2}{1}{\frac{1}{2}+z,\frac{1}{2}+\lambda+\mu+z}{\frac{1}{2}+\lambda}{-\frac{x^2}{4\pi^2n^2}},
\end{align}
where ${}_2F_1(a,b;c;\xi)$ is the Gauss hypergeometric function defined by \cite[p.~110, Equation (5.4)]{temme}
\begin{align*}
{}_2F_1(a,b;c;\xi)=\frac{\Gamma(c)}{\Gamma(b)\Gamma(c-b)}\int_0^1 t^{b-1}(1-t)^{c-b-1}(1-t\xi)^{-a}\ dt,
\end{align*}
for $\mathrm{Re}(c)>\mathrm{Re}(b)>0,\ |\arg(1-\xi)|<\pi$.
\end{theorem}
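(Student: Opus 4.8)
The plan is to derive \eqref{withmuknueqn} from the Poisson summation formula applied to the even function
\begin{equation*}
f(t):=\left(\frac{|t|x}{2}\right)^{z-\lambda}{}_{\mu}K_{z}(|t|x,\lambda),\qquad t\in\mathbb{R}\setminus\{0\},
\end{equation*}
the three terms on the left of \eqref{withmuknueqn} being precisely the two endpoint ($n=0$) contributions together with the spatial series. First I would record the small-argument behaviour of ${}_{\mu}K_{z}(x,\lambda)$: inserting the series \eqref{muknuwdef} and collecting its two homogeneous pieces shows that the prefactor $(|t|x/2)^{z-\lambda}$ exactly cancels the singular powers $x^{\pm z}$, so that $f$ extends continuously to $t=0$ with, after using $\pi/\sin(\pi z)=\Gamma(z)\Gamma(1-z)$,
\begin{equation*}
f(0)=\frac{2^{z+\mu+\lambda-1}\Gamma(z)\Gamma\left(\tfrac{1}{2}+\lambda+\mu\right)}{\Gamma\left(\tfrac{1}{2}+\lambda-z\right)}.
\end{equation*}
Since ${}_{\mu}K_{z}(x,\lambda)$ decays at infinity as does $K_{z}$ (by the asymptotics in \cite{dkk}), $f$ is continuous and exponentially decaying, Poisson summation applies, and
\begin{equation*}
2\sum_{n=1}^{\infty}\left(\frac{nx}{2}\right)^{z-\lambda}{}_{\mu}K_{z}(nx,\lambda)+f(0)=\sum_{n=-\infty}^{\infty}f(n)=\sum_{n=-\infty}^{\infty}\widehat{f}(n),
\end{equation*}
where $\widehat{f}(\xi)=2\int_{0}^{\infty}f(t)\cos(2\pi\xi t)\,dt$. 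Thus $f(0)$ furnishes the second term of \eqref{withmuknueqn}.

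Next I would identify the zero-frequency term. After the substitution $u=tx$, $\widehat{f}(0)=2\int_{0}^{\infty}f(t)\,dt$ is a constant multiple of the Mellin transform of ${}_{\mu}K_{z}(u,\lambda)$ evaluated at $s=z-\lambda+1$; here the hypothesis $\mathrm{Re}(\mu+\lambda)>0$ secures convergence and produces the factor $\Gamma(\lambda+\mu)$. Evaluating this Mellin transform (from the representation in \cite{dkk}, or directly from \eqref{muknuwdef}) gives
\begin{equation*}
\widehat{f}(0)=\frac{\sqrt{\pi}\,2^{\mu+z+\lambda}\Gamma(\lambda+\mu)\Gamma\left(\tfrac{1}{2}+z\right)}{x\,\Gamma(\lambda-z)},
\end{equation*}
which, transferred to the left-hand side, is exactly the third (subtracted) term of \eqref{withmuknueqn}.

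The heart of the proof is the evaluation of $\widehat{f}(n)$ for $n\neq0$. Expanding ${}_{\mu}K_{z}(|t|x,\lambda)$ through \eqref{muknuwdef} splits $f$ into a part built from the first ${}_1F_2$, which reduces to an even entire function carrying only powers $t^{2k}$, and a part built from the second ${}_1F_2$, which carries the extra factor $t^{2z}$, i.e. powers $t^{2z+2k}$. The cosine moments $\int_{0}^{\infty}t^{\beta-1}\cos(\omega t)\,dt=\Gamma(\beta)\omega^{-\beta}\cos(\pi\beta/2)$ annihilate the first part, because $\cos\left(\tfrac{\pi(2k+1)}{2}\right)=0$; only the second part survives, and there $\cos\left(\tfrac{\pi(2z+2k+1)}{2}\right)=-(-1)^{k}\sin(\pi z)$ cancels the $1/\sin(\pi z)$ in \eqref{muknuwdef}. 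Applying the duplication formula to $\Gamma(2z+2k+1)$ and simplifying with $\Gamma(z+k+1)/(1+z)_{k}=\Gamma(z+1)$ and $\Gamma(z+k+\tfrac12)=\Gamma(z+\tfrac12)(z+\tfrac12)_{k}$, the surviving series collapses, with $\omega=2\pi n$, into
\begin{equation*}
\widehat{f}(n)=\frac{2^{\mu+\lambda-z-1}}{\sqrt{\pi}}\left(\frac{x}{\pi}\right)^{2z}\frac{\Gamma\left(z+\tfrac12\right)\Gamma\left(\tfrac12+\lambda+\mu+z\right)}{\Gamma\left(\tfrac12+\lambda\right)}\frac{1}{n^{2z+1}}\,{}_2F_1\!\left(\tfrac12+z,\tfrac12+\lambda+\mu+z;\tfrac12+\lambda;-\frac{x^{2}}{4\pi^{2}n^{2}}\right),
\end{equation*}
so that $2\sum_{n\geq1}\widehat{f}(n)$ reproduces the right-hand side of \eqref{withmuknueqn} verbatim.

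The main obstacle will be rigour in this cosine-transform step: the two ${}_1F_2$ pieces grow individually and are not integrable against $\cos(\omega t)$, so the moment computation is only formal as written. I would make it precise by replacing the term-by-term argument with a Mellin--Parseval evaluation, pairing the convergent Mellin transform $\widetilde{f}(s)$ of $f$ (holomorphic in a vertical strip because the combination ${}_{\mu}K_{z}$ decays) with the transform $\omega^{s-1}\Gamma(1-s)\sin(\pi s/2)$ of the cosine; in the resulting single Mellin--Barnes integral the poles coming from the first ${}_1F_2$ are killed by the zeros of $\sin(\pi s/2)$, and the residual integral is summed by the Barnes-type representation of ${}_2F_1$ to yield the displayed $\widehat{f}(n)$. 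A secondary point is the legitimacy of Poisson summation itself, for which the continuity of $f$ at the origin, its exponential decay, and the bound $\widehat{f}(n)=O\!\left(n^{-1-2\mathrm{Re}(z)}\right)$ (giving $\sum|\widehat{f}(n)|<\infty$ when $\mathrm{Re}(z)>0$) together suffice.
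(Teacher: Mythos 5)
Your proposal is essentially the paper's own proof. The paper also applies Poisson summation (Theorem \ref{poisson}) to $f(t)=\left(\frac{tx}{2}\right)^{z-\lambda}{}_{\mu}K_{z}(tx,\lambda)$, obtains the same value of $f(0^{+})$ via the reflection formula, and evaluates both $\int_0^\infty f(t)\,dt$ and $\int_0^\infty f(t)\cos(2\pi nt)\,dt$ through a single lemma (Lemma \ref{intmuknuw}); there, exactly as you predict, the contribution of the first ${}_1F_2$ in \eqref{muknuwdef} is annihilated because the factor $\cos\left(\frac{\pi}{2}\left(\sigma-\frac{1}{2}\right)\right)$ vanishes at $\sigma=\frac{3}{2}$, while the second piece produces $\Gamma(2z+1)\sin(\pi z)$, cancelling the $1/\sin(\pi z)$ in \eqref{muknuwdef} and collapsing to the stated ${}_2F_1$. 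The only difference is bookkeeping: where you propose a Mellin--Parseval/Barnes-integral rigorization of the cosine moments, the paper simply quotes the tabulated transform \cite[Equation (7.542.3)]{grn} for $\int_0^\infty t^{\sigma-1}{}_1F_2(\cdots;-\eta t^2)Y_\nu(yt)\,dt$ with $\nu=\frac{1}{2}$ (an entry that is itself established by the very Mellin--Barnes argument you sketch), so your plan is a self-contained version of the same computation, and your constants check out against the paper's.

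One statement in your write-up is wrong and should be corrected, although it does not derail the proof: ${}_{\mu}K_{z}(x,\lambda)$ does \emph{not} decay exponentially ``as does $K_z$'' for general $\mu$. The exponentially growing parts of the two ${}_1F_2$'s cancel only to algebraic order; by \cite[Lemma 7.1]{dkk} one has $f(t)=O\left(t^{-2\lambda-2\mu-1}\right)$ as $t\to\infty$, and this algebraic rate is precisely where the hypothesis $\mathrm{Re}(\mu+\lambda)>0$ enters: it is what guarantees the existence of $\int_0^\infty f(t)\,dt$, the absolute convergence of $\sum_{n\geq1}f(n)$, and the integrability/bounded-variation conditions of Theorem \ref{poisson}. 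Indeed, if the decay were exponential, the hypothesis $\mathrm{Re}(\mu+\lambda)>0$ would be superfluous for convergence --- which your own (correct) appeal to it for the convergence of $\widehat{f}(0)$ implicitly contradicts. Replacing the exponential-decay claim by this estimate from \cite[Lemma 7.1]{dkk} makes your argument sound and brings it in line with the paper's.
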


For Theorem \ref{withmuknu} also, we remove the restriction on $z$ in the following theorem.
\begin{theorem}\label{muknuwanalytic}
Let $\mathrm{Re}(x)>0$ and $\mathrm{Re}(\mu+\lambda)>0$. Let $M>0$ be an integer. If $\mathrm{Re}(z)>-M$, then 
\begin{align}\label{muknuwanalyticeqn}
&2\sum_{n=1}^\infty\left(\frac{nx}{2}\right)^{z-\lambda}{}_{\mu}K_{z}(nx,\lambda)+\frac{2^{z+\mu+\lambda-1}\Gamma(z)\Gamma\left(\frac{1}{2}+\lambda+\mu\right)}{\Gamma\left(\frac{1}{2}+\lambda-z\right)}-\frac{\sqrt{\pi}2^{\mu+z+\lambda}\Gamma(\lambda+\mu)\Gamma\left(z+\frac{1}{2}\right)}{x\Gamma(\lambda-z)}\nonumber\\
&=\frac{2^{\mu+\lambda-z}}{\sqrt{\pi}}\left(\frac{x}{\pi}\right)^{2z}\frac{\Gamma\left(z+\frac{1}{2}\right)\Gamma\left(\frac{1}{2}+\lambda+\mu+z\right)}{\Gamma\left(\frac{1}{2}+\lambda\right)}\sum_{n=1}^\infty\frac{1}{n^{2z+1}}\Bigg\{\pFq{2}{1}{\frac{1}{2}+z,\frac{1}{2}+\lambda+\mu+z}{\frac{1}{2}+\lambda}{-\frac{x^2}{4\pi^2n^2}}\nonumber\\
&\qquad-\sum_{m=0}^{M-1}\frac{\left(\frac{1}{2}+z\right)_m\left(\frac{1}{2}+\lambda+\mu+z\right)_m}{m!\left(\frac{1}{2}+\lambda\right)_m}\left({-\frac{x^2}{4\pi^2n^2}}\right)^{m}\Bigg\}\nonumber\\
&\qquad+\frac{2^{\mu+\lambda-z}}{\sqrt{\pi}}\left(\frac{x}{\pi}\right)^{2z}\sum_{m=0}^{M-1}\frac{\Gamma\left(z+\frac{1}{2}+m\right)\Gamma\left(\frac{1}{2}+\lambda+\mu+z+m\right)\zeta(2z+2m+1)}{m!\Gamma\left(\frac{1}{2}+\lambda+m\right)}\left({-\frac{x^2}{4\pi^2}}\right)^{m}.
\end{align}
\end{theorem}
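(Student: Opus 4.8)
The plan is to deduce Theorem~\ref{muknuwanalytic} from Theorem~\ref{withmuknu} by the standard device of subtracting off the first $M$ terms of the Gauss hypergeometric series and then continuing analytically in $z$, in precisely the way that Theorem~\ref{continuationwithkzw} extends Theorem~\ref{withkzw}. First I would fix $\mathrm{Re}(z)>0$ so that \eqref{withmuknueqn} is available. For each sufficiently large $n$ (so that $|x^2/(4\pi^2n^2)|<1$) I expand the hypergeometric factor in its defining power series,
\begin{equation*}
\pFq{2}{1}{\tfrac12+z,\tfrac12+\lambda+\mu+z}{\tfrac12+\lambda}{-\tfrac{x^2}{4\pi^2n^2}}=\sum_{m=0}^{\infty}\frac{\left(\tfrac12+z\right)_m\left(\tfrac12+\lambda+\mu+z\right)_m}{m!\left(\tfrac12+\lambda\right)_m}\left(-\tfrac{x^2}{4\pi^2n^2}\right)^m,
\end{equation*}
and split it as the sum of its first $M$ terms plus a remainder. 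Re-inserting the finite part into the right-hand side of \eqref{withmuknueqn} and interchanging the summations over $n$ and $m$ (legitimate by absolute convergence when $\mathrm{Re}(z)>0$), the $n$-sum of the $m$-th term contributes $\sum_{n=1}^{\infty}n^{-(2z+2m+1)}=\zeta(2z+2m+1)$; combining the resulting Pochhammer symbols with the Gamma prefactor via $(a)_m=\Gamma(a+m)/\Gamma(a)$ produces exactly the closing finite sum in \eqref{muknuwanalyticeqn}, while the remainder produces the bracketed tail series. This proves \eqref{muknuwanalyticeqn} for $\mathrm{Re}(z)>0$.

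It remains to continue in $z$. I would check that both sides of \eqref{muknuwanalyticeqn} are meromorphic in the half-plane $\mathrm{Re}(z)>-M$. The left-hand series converges locally uniformly because ${}_\mu K_z(nx,\lambda)$ inherits the exponential decay of $K_z$ in $n$ \cite{dkk}, and the two explicit terms are manifestly meromorphic. On the right, the closing finite sum is meromorphic (its only singularities come from the factors $\zeta(2z+2m+1)$ and the Gamma functions), and the tail series converges locally uniformly for $\mathrm{Re}(z)>-M$: for large $n$ the bracketed expression is the remainder of the above power series after $M$ terms, hence is $O(n^{-2M})$ uniformly on compact sets of $z$, so its general term is $O\!\left(n^{-(2\mathrm{Re}(z)+2M+1)}\right)$ and the series is analytic for $\mathrm{Re}(z)>-M$. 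Since the two sides are meromorphic on $\mathrm{Re}(z)>-M$ and agree on the open set $\mathrm{Re}(z)>0$, they agree throughout $\mathrm{Re}(z)>-M$ by the identity theorem.

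The main obstacle is the uniform control of the hypergeometric remainder needed in the second step. One must bound the tail of the ${}_2F_1$ power series after $M$ terms uniformly for $z$ in compact subsets of $\mathrm{Re}(z)>-M$, while noting that for the finitely many small $n$ with $|x^2/(4\pi^2n^2)|\ge 1$ the hypergeometric function is to be read through its integral representation (and each such term is individually analytic in $z$, so contributes nothing to the convergence question). Everything else—the interchange of the $n$- and $m$-summations and the identification of the finite sum—is routine once absolute convergence for $\mathrm{Re}(z)>0$ is in hand.
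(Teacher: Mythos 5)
Your proposal follows essentially the same route as the paper: add and subtract the first $M$ terms of the ${}_2F_1$ power series inside the $n$-sum, identify the resulting $\zeta(2z+2m+1)$ factors via $(a)_m=\Gamma(a+m)/\Gamma(a)$, and then extend to $\mathrm{Re}(z)>-M$ by showing both sides are analytic there (uniform convergence plus Weierstrass' theorem) and invoking analytic continuation. If anything, you are slightly more careful than the paper in noting that for the finitely many small $n$ with $\left|x^2/(4\pi^2n^2)\right|\ge 1$ the subtracted finite sum is not a literal truncation of a convergent series, but this does not change the argument.
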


We separately record $z=0$ case of the above theorem below.
\begin{theorem}\label{nu=0}
Let $\mathrm{Re}(x)>0$ and $\mathrm{Re}(\mu+\lambda)>0$. We have
\begin{align}\label{nu=0eqn}
&2\sum_{n=1}^\infty\left(\frac{nx}{2}\right)^{-\lambda}{}_{\mu}K_{0}(nx,\lambda)-\frac{\pi 2^{\mu+\lambda}\Gamma(\lambda+\mu)}{x\Gamma(\lambda)}\nonumber\\
&=2^{\mu+\lambda}\frac{\Gamma\left(\frac{1}{2}+\lambda+\mu\right)}{\Gamma\left(\frac{1}{2}+\lambda\right)}\sum_{n=1}^\infty\frac{1}{n}\Bigg\{\pFq{2}{1}{\frac{1}{2},\frac{1}{2}+\lambda+\mu}{\frac{1}{2}+\lambda}{-\frac{x^2}{4\pi^2n^2}}-1\Bigg\}\nonumber\\
&\qquad+\frac{2^{\mu+\lambda-1}\Gamma\left(\frac{1}{2}+\lambda+\mu\right)}{\Gamma\left(\lambda+\frac{1}{2}\right)}\left(2\left(\gamma-\log\left(\frac{x}{4\pi}\right)\right)-\psi\left(\lambda+\frac{1}{2}\right)+\psi\left(\lambda+\mu+\frac{1}{2}\right)\right).
\end{align}
\end{theorem}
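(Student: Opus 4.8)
The plan is to derive Theorem~\ref{nu=0} not from scratch but as the limiting case $z\to 0$ of Theorem~\ref{muknuwanalytic} taken with $M=1$. The choice $M=1$ is what makes the statement collapse to its clean form: the finite correction $\sum_{m=0}^{M-1}$ reduces to its single $m=0$ term, so the subtraction inside the ${}_2F_1$–sum becomes simply $-1$, and the last line of \eqref{muknuwanalyticeqn} shrinks to the one term carrying $\zeta(2z+1)$. Since Theorem~\ref{muknuwanalytic} holds for $\mathrm{Re}(z)>-1$, the point $z=0$ sits in the interior of the region of validity, so passing to the limit is legitimate.

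Most terms are regular at $z=0$ and simply converge to their values there. The Bessel-type sum $2\sum_{n\ge 1}(nx/2)^{z-\lambda}\,{}_\mu K_z(nx,\lambda)$ tends to $2\sum_{n\ge 1}(nx/2)^{-\lambda}\,{}_\mu K_0(nx,\lambda)$ by the definition ${}_\mu K_0=\lim_{z\to 0}{}_\mu K_z$; the term $\frac{\sqrt\pi\,2^{\mu+z+\lambda}\Gamma(\lambda+\mu)\Gamma(z+1/2)}{x\,\Gamma(\lambda-z)}$ tends to $\frac{\pi\,2^{\mu+\lambda}\Gamma(\lambda+\mu)}{x\,\Gamma(\lambda)}$ via $\Gamma(1/2)=\sqrt\pi$; and the prefactor of the main sum tends to $2^{\mu+\lambda}\,\Gamma(1/2+\lambda+\mu)/\Gamma(1/2+\lambda)$, producing the first sum on the right of \eqref{nu=0eqn}.

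The only two singular terms at $z=0$ are the term $\frac{2^{z+\mu+\lambda-1}\Gamma(z)\Gamma(1/2+\lambda+\mu)}{\Gamma(1/2+\lambda-z)}$ on the left (a simple pole from $\Gamma(z)$) and the $m=0$ term of the last sum on the right (a simple pole from $\zeta(2z+1)$, as $\zeta(s)$ has its pole at $s=1$). The heart of the argument is to show these cancel and to isolate the finite remainder. I would expand each near $z=0$ using $\Gamma(z)=\tfrac1z-\gamma+O(z)$ and $\zeta(2z+1)=\tfrac{1}{2z}+\gamma+O(z)$, together with the first-order logarithmic-derivative expansions of the smooth factors, namely $\frac{\Gamma(1/2+\lambda)}{\Gamma(1/2+\lambda-z)}=1+z\,\psi(1/2+\lambda)+O(z^2)$, $2^{-z}(x/\pi)^{2z}=1+z(2\log(x/\pi)-\log 2)+O(z^2)$, and $\Gamma(z+1/2)\Gamma(1/2+\lambda+\mu+z)=\Gamma(1/2)\Gamma(1/2+\lambda+\mu)\big(1+z(\psi(1/2)+\psi(1/2+\lambda+\mu))+O(z^2)\big)$. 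Both singular terms carry the same residue $\tfrac12\cdot 2^{\mu+\lambda}\Gamma(1/2+\lambda+\mu)/\Gamma(1/2+\lambda)$, so in their difference the $1/z$ poles cancel and a finite limit survives.

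Collecting the surviving order-$z^0$ contributions and substituting the special value $\psi(1/2)=-\gamma-2\log 2$ — the step that converts the stray $\log 2$'s and $\gamma$'s into a clean combination — yields the digamma–logarithm expression $\frac{2^{\mu+\lambda-1}\Gamma(1/2+\lambda+\mu)}{\Gamma(\lambda+1/2)}\big(2(\gamma-\log(x/(4\pi)))-\psi(\lambda+1/2)+\psi(\lambda+\mu+1/2)\big)$ on the right of \eqref{nu=0eqn}. The main obstacle is precisely this finite-part bookkeeping: one must retain the constant term of every factor, check that the two residues genuinely coincide so the poles cancel, and then assemble the $\gamma$, $\log$, and digamma pieces without sign slips. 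The analytic input — convergence and the validity of the limit — is routine once $M=1$ places $z=0$ inside $\mathrm{Re}(z)>-1$.
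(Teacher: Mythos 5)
Your proposal is correct and follows essentially the same route as the paper's own proof: the paper likewise sets $M=1$ in Theorem \ref{muknuwanalytic}, lets $z\to0$, expands $\Gamma(z)=\tfrac1z-\gamma+O(z)$ and $\zeta(2z+1)=\tfrac{1}{2z}+\gamma+O(z)$ together with the first-order expansions of the smooth factors, cancels the matching simple poles (both with residue $2^{\mu+\lambda-1}\Gamma(\tfrac12+\lambda+\mu)/\Gamma(\tfrac12+\lambda)$), and finishes with $\psi(\tfrac12)=-\gamma-2\log 2$. Your finite-part bookkeeping reproduces exactly the paper's intermediate expression $2\log x-2\log(2\pi)+3\gamma+\psi(\tfrac12)+\psi(\tfrac12+\lambda+\mu)-\psi(\tfrac12+\lambda)$, so nothing is missing.
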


Theorem \ref{nu=0} gives a formula of Watson \cite[p.~301, Equation (6)]{watsonself}.
\begin{corollary}\label{nu=0ofmuknu}
For $\mathrm{Re}(x)>0$,
\begin{align*}
2\sum_{n=1}^\infty K_{0}(nx)-\frac{\pi}{x}=\sum_{n=1}^\infty\left\{\frac{2\pi}{\sqrt{x^2+4n^2\pi^2}}-\frac{1}{n}\right\}+\gamma-\log\left(\frac{x}{4\pi}\right).
\end{align*}
\end{corollary}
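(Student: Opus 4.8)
The plan is to obtain Corollary \ref{nu=0ofmuknu} directly from Theorem \ref{nu=0} by setting $\mu=0$ and then letting $\lambda\to0^+$. First I would put $\mu=0$ in \eqref{nu=0eqn}; the hypothesis $\mathrm{Re}(\mu+\lambda)>0$ then becomes $\mathrm{Re}(\lambda)>0$, so the identity remains valid for every such $\lambda$, and the domain restriction $\mu+\lambda\neq-\tfrac12,-\tfrac32,\dots$ is automatically met. With $\mu=0$ three simplifications happen at once: in the second term on the left the ratio $\Gamma(\lambda+\mu)/\Gamma(\lambda)$ collapses to $1$; on the right the ratio $\Gamma(\tfrac12+\lambda+\mu)/\Gamma(\tfrac12+\lambda)$ collapses to $1$; and the two digamma contributions $-\psi(\lambda+\tfrac12)+\psi(\lambda+\mu+\tfrac12)$ cancel identically.

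Next I would simplify the left-hand side using the reduction ${}_0K_0(nx,\lambda)=(nx)^{\lambda}K_0(nx)$, which is the $z=0$ instance of the relation ${}_{-z}K_z(x,\lambda)=x^{\lambda}K_z(x)$ recorded in the excerpt. Since $(nx/2)^{-\lambda}(nx)^{\lambda}=2^{\lambda}$, the entire left side factors as $2^{\lambda}\bigl(2\sum_{n=1}^\infty K_0(nx)-\pi/x\bigr)$.

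On the right-hand side the crucial step is the hypergeometric collapse ${}_2F_1(a,b;b;\xi)=(1-\xi)^{-a}$. With the upper and lower parameters both equal to $\tfrac12+\lambda$ and with $a=\tfrac12$, the Gauss function reduces to $\bigl(1+x^2/(4\pi^2n^2)\bigr)^{-1/2}=2\pi n/\sqrt{x^2+4\pi^2n^2}$, so that $\tfrac1n\{{}_2F_1-1\}$ becomes exactly $2\pi/\sqrt{x^2+4\pi^2n^2}-1/n$. After these reductions the right side also carries the overall factor $2^{\lambda}$, taking the form $2^{\lambda}\bigl[\sum_{n=1}^\infty\{2\pi/\sqrt{x^2+4\pi^2n^2}-1/n\}+(\gamma-\log(x/4\pi))\bigr]$.

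Finally I would cancel the common factor $2^{\lambda}$ from both sides and pass to the limit $\lambda\to0^+$. The only point requiring care is interchanging this limit with the series on the right: each summand tends to its value at $\lambda=0$, and the limiting series converges since its summand is $O(n^{-3})$ as $n\to\infty$, so a dominated-convergence argument legitimizes the interchange, and the corollary follows. I expect the simultaneous bookkeeping of the three $\mu=0$ simplifications to be the only delicate point, the limit itself being routine.
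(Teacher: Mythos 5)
Your proposal is correct and follows essentially the same route as the paper's one-line proof: set $\mu=0$ in Theorem \ref{nu=0}, reduce ${}_{0}K_{0}(nx,\lambda)$ to $K_{0}(nx)$ via ${}_{-z}K_{z}(x,\lambda)=x^{\lambda}K_{z}(x)$, collapse the Gauss function through ${}_2F_1(a,b;b;\xi)=(1-\xi)^{-a}$ (the paper phrases this as ${}_1F_0\bigl(\tfrac12;-;-x\bigr)=(1+x)^{-1/2}$ after letting $\lambda\to0$), and let $\lambda\to0^{+}$. Your version is if anything slightly cleaner, since after cancelling the common factor $2^{\lambda}$ nothing depends on $\lambda$ at all, so the dominated-convergence justification you flag is not actually needed.
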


This paper is organized as follows. Theorem \ref{withkzw} and Theorem \ref{continuationwithkzw} are proved in Section \ref{swithkzw}. Section \ref{newproof} is devoted to giving a new proof of Theorem \ref{dkm}. We derive Theorem \ref{withmuknu}, and Theorem \ref{muknuwanalytic} and its Corollary in Section \ref{swithmuknu} and its subsection \ref{ssmuknu}.

\section{Generalization of Watson's result with $K_{z,w}(x)$}\label{swithkzw}

The main ingredient to prove Theorem \ref{withkzw} and Theorem \ref{withmuknu} is Poisson's summation formula in the following form \cite[p.~60-61]{titchfourier}:
\begin{theorem}\label{poisson}
If $f(t)$ is continuous and of bounded variation on $(0,\infty)$, and if $\int_0^\infty f(t)\ dt$ exists, then
\begin{align}
f(0^+)+2\sum_{n=1}^\infty f(n)=2\int_0^\infty f(t)\ dt+4\sum_{n=1}^\infty\int_0^\infty f(t)\cos(2\pi nt)\ dt.\nonumber
\end{align}

\end{theorem}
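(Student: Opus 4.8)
The plan is to derive the formula from the classical convergence theory of Fourier series of functions of bounded variation, after reflecting $f$ into an even function on the whole line. First I would set $g(t):=f(|t|)$ for $t\neq 0$ and $g(0):=f(0^+)$. Since $f$ is continuous on $(0,\infty)$ and $f(0^+)$ exists, $g$ is continuous on all of $\mathbb{R}$; since $f$ is of bounded variation on $(0,\infty)$, so is $g$ on $\mathbb{R}$, with total variation twice that of $f$; and $g$ is integrable with $\int_{-\infty}^{\infty}g(t)\,dt=2\int_0^{\infty}f(t)\,dt$. In this language the entire assertion of Theorem \ref{poisson} becomes the single statement that the Fourier series of the $1$-periodization of $g$ converges at the origin to the value of that periodization there.

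Next I would introduce the $1$-periodic function
\begin{equation*}
G(t):=\sum_{n=-\infty}^{\infty}g(t+n),
\end{equation*}
and show that it is continuous and of bounded variation. Writing $V_I(g)$ for the total variation of $g$ over an interval $I$, the key analytic input is the bound $\sup_{t\in[n,n+1]}|g(t)|\le |g(n)|+V_{[n,n+1]}(g)$, in which both terms are summable over $n\in\mathbb{Z}$: the local variations $V_{[n,n+1]}(g)$ sum to $V_{\mathbb{R}}(g)<\infty$, while the values $|g(n)|$ are summable because they exceed the (summable) averages $\int_n^{n+1}|g|$ by at most the local variation. This makes the defining series uniformly convergent, so $G$ is continuous, and $V_{[0,1]}(G)\le V_{\mathbb{R}}(g)<\infty$ by lower semicontinuity of the variation, so $G$ is itself of bounded variation.

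I would then identify the Fourier coefficients of $G$. Using the same uniform summability to interchange sum and integral and collapsing the integrals over $[n,n+1]$ gives
\begin{equation*}
c_k=\int_0^1 G(t)e^{-2\pi i k t}\,dt=\int_{-\infty}^{\infty}g(u)e^{-2\pi i k u}\,du=2\int_0^{\infty}f(u)\cos(2\pi k u)\,du,
\end{equation*}
the last step using the evenness of $g$; in particular $c_{-k}=c_k$. Now the Dirichlet--Jordan theorem applies: as $G$ is of bounded variation and continuous at $t=0$, the symmetric partial sums of its Fourier series converge there to $G(0)$, that is $G(0)=c_0+2\sum_{k=1}^{\infty}c_k$. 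Reading off $G(0)=\sum_{n}g(n)=f(0^+)+2\sum_{n=1}^{\infty}f(n)$ and substituting the coefficients just computed produces exactly the claimed identity.

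The hard part is not the bookkeeping but justifying every interchange from the weak hypotheses alone. Bounded variation together with integrability enters only through the summability displayed above, and this is precisely the feature that both forces uniform convergence of the periodization and legitimises term-by-term integration; because $\sum_k|c_k|$ need not converge without extra decay, the right-hand side must be understood as the symmetric cosine summation delivered by Dirichlet--Jordan, and it is this convergence of a Fourier series at a point of continuity of a bounded-variation function that carries the whole theorem. As this is a classical result, one may alternatively simply appeal to Titchmarsh \cite[p.~60--61]{titchfourier}.
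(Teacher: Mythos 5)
The paper does not prove Theorem \ref{poisson} at all: it is quoted, with hypotheses and conclusion verbatim, from Titchmarsh \cite[p.~60--61]{titchfourier}, so your closing alternative ("simply appeal to Titchmarsh") is precisely what the author does. Your periodization-plus-Dirichlet--Jordan argument is the standard modern route to such a statement, and it is complete and correct under the reading $\int_0^\infty|f(t)|\,dt<\infty$; since the theorem is only ever applied in this paper to functions of exponential decay (e.g.\ $f(t)=(xt/2)^zK_{z,w}(xt)$), the absolutely integrable case suffices for everything the author needs.

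As a proof of the theorem \emph{as stated}, however, there is a genuine gap: you silently upgrade the hypothesis "$\int_0^\infty f(t)\,dt$ exists" to Lebesgue integrability. In Titchmarsh's usage existence means convergence of $\lim_{T\to\infty}\int_0^T f(t)\,dt$, possibly only conditional, and your key summability claim --- that the averages $\int_n^{n+1}|g(t)|\,dt$ are summable --- is exactly the assertion $g\in L^1(\mathbb{R})$, which does not follow. Concretely, let $f$ consist of smooth humps of alternating sign, the $k$th of height $2^{-k}$ and width $2^k/k$, so of area $\pm 1/k$: then $f$ is continuous and of bounded variation (total variation about $\sum_k 2^{-k+1}<\infty$), the partial integrals converge like the alternating harmonic series, so $\int_0^\infty f(t)\,dt$ exists, yet $\int_0^\infty|f(t)|\,dt=\sum_k 1/k=\infty$. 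For such an $f$ the series $\sum_n g(t+n)$ defining your periodization $G$ is not absolutely (let alone uniformly) convergent, so the continuity and bounded-variation claims for $G$ and the term-by-term computation of $c_k$ all break down, and even $\sum_n f(n)$ converges only conditionally. This is why Titchmarsh's own proof avoids periodization: he sums the cosine integrals up to $N$ explicitly, obtaining an integral of $f$ against a Dirichlet-type kernel $\sin\left((2N+1)\pi t\right)/\tan(\pi t)$, and applies the Jordan test locally at each integer, which needs only bounded variation together with the (possibly conditional) convergence of $\int_0^\infty f(t)\,dt$. So either strengthen the hypothesis of your proof to $f\in L^1(0,\infty)$ --- harmless for every use in this paper --- or replace the periodization step by the partial-sum, Dirichlet-kernel argument to capture the stated generality.
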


One of the nice properties of $K_{z,w}(t)$, derived in \cite{dkmm}, is the following integral representation for it \cite[Theorem 1.7]{dkmm}.
Let $z,w\in\mathbb{C}$ and $|\arg(x)|<\frac{\pi}{4}$, we have
\begin{align}\label{intkzw}
K_{z,w}(x) &=\left(\frac{x}{2}\right)^{-z}\int_0^\infty e^{-u^2-\frac{x^2}{u^2}}\cos(wu)\cos\left(\frac{wx}{u}\right)u^{2z-1}\ du.
\end{align}

To prove Theorem \ref{withkzw}, it is imperative to obtain the following new integral evaluation which contain $K_{z,w}(x)$ in its integrand. 
\begin{lemma}\label{kzwintevaluation}
Let $w\in\mathbb{C}$, $\mathrm{Re}(z)>0$, and $|\arg(x)|<\frac{\pi}{4}$. Let $\mathcal{A}\left(n,z,w,x\right)$ be defined in \eqref{Ax}. If $a\geq0$, then
\begin{align}\label{kzwintevaluationeqn}
\int_0^\infty \left(\frac{xt}{2}\right)^zK_{z,w}(xt) \cos(at)\ dt&=\frac{1}{4}\sqrt{\pi}\Gamma\left(\frac{1}{2}+z\right)\frac{x^{2z}e^{-\frac{w^2}{4}}}{(x^2+a^2)^{z+\frac{1}{2}}}\mathcal{A}\left(\frac{a}{2\pi},z,w,x\right).
\end{align}
\end{lemma}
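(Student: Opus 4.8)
The plan is to substitute the integral representation \eqref{intkzw} for $K_{z,w}(xt)$ into the left-hand side of \eqref{kzwintevaluationeqn} and reduce the whole expression to elementary Gaussian integrals. Writing $\left(\frac{xt}{2}\right)^{z}K_{z,w}(xt)$ by means of \eqref{intkzw}, the left-hand side becomes a double integral in $t$ and in the internal variable $u$. I would first interchange the order of the two integrations. Under the hypotheses $\mathrm{Re}(z)>0$, $|\arg x|<\frac{\pi}{4}$ (equivalently $\mathrm{Re}(x^{2})>0$) and $a\geq 0$, the combined integrand is absolutely integrable: the factor $e^{-u^{2}}u^{2z-1}$ controls the $u$-behaviour while $e^{-x^{2}t^{2}/u^{2}}$ gives Gaussian decay in $t$, so Fubini's theorem applies. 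After the swap the inner $t$-integral has the form $\int_{0}^{\infty}e^{-p^{2}t^{2}}\cos(at)\cos(\gamma t)\,dt$, with $p$ and $\gamma$ depending on $u$, $x$, $w$.

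Next I would carry out this inner integral. Applying the product-to-sum identity to $\cos(at)\cos(\gamma t)$ and then the classical evaluation $\int_{0}^{\infty}e^{-p^{2}t^{2}}\cos(bt)\,dt=\frac{\sqrt{\pi}}{2p}e^{-b^{2}/(4p^{2})}$ to each piece, the two resulting exponentials recombine into a single hyperbolic cosine. In this step the cross term in the exponent produces precisely the factor $e^{-\frac{w^{2}}{4}}$, while the remaining quadratic part gives a Gaussian $e^{-a^{2}u^{2}/x^{2}}$ which, after the outer integration, will be responsible for the denominator $(x^{2}+a^{2})^{z+\frac12}$. What is left is a single integral of the shape $\int_{0}^{\infty}e^{-\alpha u^{2}}u^{2z}\cos(wu)\cosh(\gamma u)\,du$, with $\alpha=(x^{2}+a^{2})/x^{2}$.

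To evaluate this $u$-integral I would write $\cos(wu)=\cosh(iwu)$ and use $\cosh A\cosh B=\tfrac12\left[\cosh(A+B)+\cosh(A-B)\right]$ to split it into two integrals of the type $\int_{0}^{\infty}e^{-\alpha u^{2}}u^{2z}\cosh(\beta u)\,du$ with $\beta=iw\pm\gamma$. Each of these is handled by expanding $\cosh$ in its power series and integrating term by term (legitimate because $\mathrm{Re}(z)>0$ guarantees convergence at the origin and the Gaussian controls the tail), after which the Legendre duplication formula recasts the resulting series as a confluent hypergeometric function:
\begin{equation*}
\int_{0}^{\infty}e^{-\alpha u^{2}}u^{2z}\cosh(\beta u)\,du=\tfrac{1}{2}\,\alpha^{-z-\frac12}\,\Gamma\left(z+\tfrac12\right)\,{}_1F_1\left(\tfrac12+z;\tfrac12;\frac{\beta^{2}}{4\alpha}\right).
\end{equation*}

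Finally I would identify the arguments and collect the constants. The key algebraic step is the factorisation $x^{2}+a^{2}=(a+ix)(a-ix)$, which collapses each $\beta^{2}/(4\alpha)$ to $\frac{w^{2}(a\pm ix)}{4(a\mp ix)}$; setting $a=2n\pi$, i.e. $n=\frac{a}{2\pi}$, these are exactly the two arguments in $\mathcal{A}(n,z,w,x)$ of \eqref{Ax}, so the sum of the two hypergeometric functions is $\mathcal{A}\!\left(\frac{a}{2\pi},z,w,x\right)$. Gathering the prefactor $\alpha^{-z-\frac12}=x^{2z+1}(x^{2}+a^{2})^{-z-\frac12}$ together with the $\frac{\sqrt{\pi}}{x}$ and $e^{-\frac{w^{2}}{4}}$ from the $t$-integration and the factor $\frac14$ coming from the two product-to-sum splittings yields the right-hand side of \eqref{kzwintevaluationeqn}. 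I expect the main obstacle to be the bookkeeping: propagating the several powers of $2$ and the factor $e^{-w^{2}/4}$ correctly through the two Gaussian evaluations, and making the hypergeometric-argument simplification land precisely on $\mathcal{A}\!\left(\frac{a}{2\pi},z,w,x\right)$; a secondary technical point is the rigorous justification of the interchange of summation and integration in the termwise evaluation above.
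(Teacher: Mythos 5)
Your proposal is correct and follows essentially the same route as the paper's proof: substitute the integral representation \eqref{intkzw}, interchange the two integrals, evaluate the Gaussian $t$-integral to pull out $e^{-w^2/4}$ and a factor $\cosh\left(\frac{auw}{x}\right)$, then split the remaining $u$-integral into two pieces that evaluate to ${}_1F_1$'s whose arguments collapse to those of $\mathcal{A}$ via $x^2+a^2=(a+ix)(a-ix)$. The only cosmetic difference is that the paper quotes the tabulated integral \cite[Equation (3.952.8)]{grn} for the $u$-integral (writing $\cos(wu)\cosh\left(\frac{auw}{x}\right)$ as two cosines of complex argument), whereas you rederive the equivalent $\cosh$-evaluation by termwise series integration and the identity $(2k)!=4^k k!\left(\tfrac12\right)_k$ --- the same formula either way.
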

\begin{proof}
Invoke \eqref{intkzw} to deduce that
\begin{align}\label{12q}
\int_0^\infty \left(\frac{xt}{2}\right)^zK_{z,w}(xt) \cos(at)\ dt&=\int_0^\infty\int_0^\infty e^{-u^2-\frac{x^2t^2}{4u^2}}\cos(wu)\cos\left(\frac{wxt}{2u}\right)u^{2z-1}\cos(at)\ dudt\nonumber\\
&=\int_0^\infty e^{-u^2}\cos(wu)u^{2z-1}\int_0^\infty e^{-\frac{x^2t^2}{4u^2}}\cos\left(\frac{wxt}{2u}\right)\cos(at)\ dtdu,
\end{align}
where in the last step we interchanged the order of the integration which is justified because of the absolute convergence. Note that 
\begin{align}\label{notethat}
\int_0^\infty e^{-\frac{x^2t^2}{4u^2}}\cos\left(\frac{wxt}{2u}\right)\cos(at)\ dt&=\frac{1}{2}\int_0^\infty e^{-\frac{x^2t^2}{4u^2}}\cos\left(at+\frac{wxt}{2u}\right)\ dt\nonumber\\
&\qquad+\frac{1}{2}\int_0^\infty e^{-\frac{x^2t^2}{4u^2}}\cos\left(at-\frac{wxt}{2u}\right)\ dt
\end{align}
From \cite[p.~488, Equation (3.896.4)]{grn}, for Re$(\beta)>0$, we have
\begin{align}\label{coscos}
\int_0^\infty e^{-\beta t^2}\cos\left(b t\right)\ dt=\frac{1}{2}\sqrt{\frac{\pi}{\beta}}\exp\left(-\frac{b^2}{4\beta}\right)
\end{align}
Let $\beta=\frac{x^2}{4u^2}$ and $b=at+\frac{wx}{2u}$ in \eqref{coscos} to get
\begin{align}\label{coscos1}
\int_0^\infty e^{-\frac{x^2t^2}{4u^2}}\cos\left(at+\frac{wxt}{2u}\right)\ dt=\frac{\sqrt{\pi}}{x}u\exp\left({-\frac{(2au+wx)^2}{4x^2}}\right).
\end{align}
Again use  \eqref{coscos} with $\beta=\frac{x^2}{4u^2}$ and $b=at+=-\frac{wx}{2u}$ to find
\begin{align}\label{coscos2}
\int_0^\infty e^{-\frac{x^2t^2}{4u^2}}\cos\left(at-\frac{wxt}{2u}\right)\ dt=\frac{\sqrt{\pi}}{x}u\exp\left({-\frac{(2au-wx)^2}{4x^2}}\right).
\end{align}
From \eqref{notethat}, \eqref{coscos1}, and \eqref{coscos2}, 
\begin{align}\label{1q}
\int_0^\infty e^{-\frac{x^2t^2}{4u^2}}\cos\left(\frac{wxt}{2u}\right)\cos(at)\ dt&=\frac{\sqrt{\pi}}{2x}u\left(e^{-\frac{(2au-wx)^2}{4x^2}}+e^{-\frac{(2au+wx)^2}{4x^2}}\right)\nonumber\\
&=\frac{\sqrt{\pi}}{2x}ue^{-\frac{w^2}{4}}e^{-\frac{a^2u^2}{x^2}}\left(e^{\frac{auw}{x}}+e^{-\frac{auw}{x}}\right)\nonumber\\
&=\frac{\sqrt{\pi}}{x}ue^{-\frac{w^2}{4}}e^{-\frac{a^2u^2}{x^2}}\cosh\left(\frac{auw}{x}\right).
\end{align}
Substitute value from \eqref{1q} in \eqref{12q} so as to obtain
\begin{align}\label{1q3}
\int_0^\infty \left(\frac{xt}{2}\right)^zK_{z,w}(xt) \cos(at)\ dt&=\frac{\sqrt{\pi}}{x}e^{-\frac{w^2}{4}}\int_0^\infty e^{-\left(\frac{a^2}{x^2}+1\right)u^2}\cos(wu)\cosh\left(\frac{auw}{x}\right)u^{2z}\ du\nonumber\\
&=\frac{\sqrt{\pi}}{2x}e^{-\frac{w^2}{4}}\Bigg\{\int_0^\infty e^{-\left(\frac{a^2}{x^2}+1\right)u^2}u^{2z}\cos\left(wu+\frac{iauw}{x}\right)\ du\nonumber\\
&\qquad+\int_0^\infty e^{-\left(\frac{a^2}{x^2}+1\right)u^2}u^{2z}\cos\left(wu-\frac{iauw}{x}\right)\ du\Bigg\},
\end{align}
where in the last step we used the elementary fact $2\cos(A)\cos(B)=\cos(A+B)+\cos(A-B)$. From \cite[p.~503, Equation (3.952.8)]{grn}\footnote{Condition on $\nu$ is given $\nu>0$, however, it is easy to see that this result is actually true for all $\nu\in\mathbb{C}$.}, for Re$(\beta)>0$, Re$(\mu)>0$ and $\nu\in\mathbb{C}$,  we have
\begin{align}\label{grad}
\int_0^\infty u^{\mu-1}e^{-\beta u^2}\cos(\nu u)\ du=\frac{1}{2}\beta^{-\frac{\mu}{2}}\Gamma\left(\frac{\mu}{2}\right)e^{-\frac{\nu^2}{4\beta}}{}_1F_1\left(\frac{1-\mu}{2};\frac{1}{2};\frac{\nu^2}{4\beta}\right).
\end{align}
Let $\mu=2z+1,\ \beta=\frac{a^2}{x^2}+1$ and $\nu=wu+\frac{iauw}{x}$ in \eqref{grad} to get
\begin{align}\label{gradev1}
&\int_0^\infty e^{-\left(\frac{a^2}{x^2}+1\right)u^2}u^{2z}\cos\left(wu+\frac{iauw}{x}\right)\ du\nonumber\\
&=\frac{1}{2}\Gamma\left(\frac{1}{2}+z\right)\frac{x^{2z}}{(x^2+a^2)^{z+\frac{1}{2}}}{}_1F_1\left(z+\frac{1}{2};\frac{1}{2};\frac{w^2(a-ix)}{4(a+ix)}\right).
\end{align}
Now let $\mu=2z+1,\ \beta=\frac{a^2}{x^2}+1$ and $\nu=wu-\frac{iauw}{x}$ in \eqref{grad} so that
\begin{align}\label{gradev2}
&\int_0^\infty e^{-\left(\frac{a^2}{x^2}+1\right)u^2}u^{2z}\cos\left(wu-\frac{iauw}{x}\right)\ du\nonumber\\
&=\frac{1}{2}\Gamma\left(\frac{1}{2}+z\right)\frac{x^{2z}}{(x^2+a^2)^{z+\frac{1}{2}}}{}_1F_1\left(z+\frac{1}{2};\frac{1}{2};\frac{w^2(a+ix)}{4(a-ix)}\right).
\end{align}
Finally substitute values from \eqref{gradev1} and \eqref{gradev2} in \eqref{1q3} and use \eqref{Ax} to arrive at \eqref{kzwintevaluationeqn}.
\end{proof}

As a special case of Lemma \ref{kzwintevaluation}, we get \cite[p.~299]{watsonself}.
\begin{corollary}\label{cor}
Let $z\in\mathbb{C}$ and $|\arg(x)|<\frac{\pi}{4}$. If $a\geq0$, then
\begin{align}\label{coreqn}
\int_0^\infty \left(\frac{xt}{2}\right)^zK_{z}(xt) \cos(at)\ dt&=\frac{1}{2}\sqrt{\pi}\Gamma\left(\frac{1}{2}+z\right)\frac{x^{2z}}{(x^2+a^2)^{z+\frac{1}{2}}}.
\end{align}
\end{corollary}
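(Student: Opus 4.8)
The plan is to obtain Corollary \ref{cor} as the $w=0$ specialization of Lemma \ref{kzwintevaluation}, so that no new integration is required. The first step is to record the behaviour of the three $w$-dependent factors on the right-hand side of \eqref{kzwintevaluationeqn} as $w\to 0$. The exponential prefactor satisfies $e^{-w^2/4}\big|_{w=0}=1$. For the function $\mathcal{A}$ defined in \eqref{Ax}, both confluent hypergeometric functions have argument proportional to $w^2$, so at $w=0$ each reduces to ${}_1F_1\!\left(\tfrac{1}{2}+z;\tfrac{1}{2};0\right)=1$; hence $\mathcal{A}\!\left(\tfrac{a}{2\pi},z,0,x\right)=1+1=2$, independently of $a$ and $x$.

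Next I would use the fact, recorded just after \eqref{kzwdef}, that $K_{z,0}(x)=K_z(x)$, so that the integrand on the left of \eqref{kzwintevaluationeqn} collapses to $\left(\tfrac{xt}{2}\right)^z K_z(xt)\cos(at)$ when $w=0$. Substituting $w=0$ into \eqref{kzwintevaluationeqn} and inserting the three evaluations above gives
\begin{align*}
\int_0^\infty \left(\frac{xt}{2}\right)^z K_z(xt)\cos(at)\,dt = \frac{1}{4}\sqrt{\pi}\,\Gamma\!\left(\frac{1}{2}+z\right)\frac{x^{2z}}{(x^2+a^2)^{z+\frac{1}{2}}}\cdot 2,
\end{align*}
which is precisely \eqref{coreqn}. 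This proves the corollary on the half-plane $\mathrm{Re}(z)>0$ inherited from the hypothesis of Lemma \ref{kzwintevaluation}.

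The only point requiring a little care is the enlargement of the range of $z$: Corollary \ref{cor} is asserted for $z\in\mathbb{C}$ (that is, wherever both sides make sense), whereas Lemma \ref{kzwintevaluation} is proved only for $\mathrm{Re}(z)>0$. Here I would invoke analytic continuation in $z$. Using the small-argument asymptotics $K_z(y)\sim \tfrac{1}{2}\Gamma(-z)(y/2)^{z}$ as $y\to 0^+$ for $\mathrm{Re}(z)<0$, one checks that the left-hand integrand behaves like a constant times $t^{2\mathrm{Re}(z)}$ near $t=0$, so the integral converges and defines a holomorphic function of $z$ on $\mathrm{Re}(z)>-\tfrac{1}{2}$; the right-hand side is manifestly meromorphic in $z$, with its only relevant singularity coming from $\Gamma\!\left(z+\tfrac{1}{2}\right)$ at $z=-\tfrac{1}{2}$. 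Since the two holomorphic functions agree on the half-plane $\mathrm{Re}(z)>0$, they agree throughout $\mathrm{Re}(z)>-\tfrac{1}{2}$ by the identity theorem, which yields the stated identity in full generality. I do not anticipate any genuine obstacle: the entire content is the collapse $\mathcal{A}\to 2$ at $w=0$, and the analytic-continuation step is routine.
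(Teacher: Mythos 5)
Your proof is correct and takes essentially the same route as the paper, which likewise obtains the corollary by putting $w=0$ in Lemma \ref{kzwintevaluation} and using $K_{z,0}(xt)=K_z(xt)$ together with ${}_1F_1(b;c;0)=1$, so that $\mathcal{A}\left(\tfrac{a}{2\pi},z,0,x\right)=2$. Your additional analytic-continuation step is sound and in fact more careful than the paper's proof, which asserts the result for $z\in\mathbb{C}$ without comment even though the integral converges only for $\mathrm{Re}(z)>-\tfrac{1}{2}$, exactly the half-plane your argument identifies.
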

\begin{proof}
Let $w=0$ in Lemma \ref{kzwintevaluation} and use the fact that $K_{z,0}(xt)=K_{z}(xt)$ and ${}_1F_1(b;c;0)=1$ to get \eqref{coreqn}.
\end{proof}

We now have all necessary results to prove Theorem \ref{withkzw}.

\begin{proof}[Theorem \textup{\ref{withkzw}}][]
Let $f(t)=\left(\frac{xt}{2}\right)^zK_{z,w}(xt)$ in Theorem \ref{poisson}.  From \cite[Theorem 1.13(i)]{dkmm}, as $x\to0$,
\begin{align}\label{smallx}
K_{z,w}(x)\sim\frac{1}{2}\Gamma(z)\left(\frac{x}{2}\right)^{-z}{}_1F_1\left(z;\frac{1}{2};-\frac{w^2}{4}\right).
\end{align}
From \cite[Theorem 1.12]{dkmm}, we known that $\left(\frac{xt}{2}\right)^zK_{z,w}(xt)$ has exponential decay, therefore, along with \eqref{smallx} it easy to see that the integral $\int_0^\infty f(t)\ dt$ converges. Now by using \eqref{smallx}, we see that 
\begin{align}\label{f0plus}
\lim_{x\to0}\left(\left(\frac{xt}{2}\right)^zK_{z,w}(xt)\right)=f(0^+)=\frac{1}{2}\Gamma(z){}_1F_1\left(z;\frac{1}{2};-\frac{w^2}{4}\right).
\end{align}
Let $a=0$ in Lemma \ref{kzwintevaluation} to find
\begin{align}\label{witha=0}
\int_0^\infty f(t)\ dt&=\frac{1}{2x}\sqrt{\pi}\Gamma\left(\frac{1}{2}+z\right)e^{-\frac{w^2}{4}}{}_1F_1\left(z+\frac{1}{2};\frac{1}{2};-\frac{w^2}{4}\right).
\end{align}
Again invoke Theorem \ref{kzwintevaluationeqn} with $a=2\pi n$ so that
{\allowdisplaybreaks\begin{align}\label{withcos}
\int_0^\infty f(t) \cos(2\pi nt)\ dt&=\frac{1}{4}\sqrt{\pi}\Gamma\left(\frac{1}{2}+z\right)\frac{x^{2z}e^{-\frac{w^2}{4}}}{(x^2+4\pi^2n^2)^{z+\frac{1}{2}}}\mathcal{A}\left(n,z,w,x\right).
\end{align}}
Substitute values from \eqref{f0plus}, \eqref{witha=0} and \eqref{withcos} in Theorem \ref{poisson} to establish \eqref{withkzweqn}.
\end{proof}

\subsection{Analytic continuation of Theorem \ref{withkzw}}\label{sswithkzw}

This subsection is dedicated to finding analytic continuation of Theorem \ref{withkzw}, that is, to prove Theorem \ref{continuationwithkzw}.

\begin{proof}[Theorem \textup{\ref{continuationwithkzw}}][]
Note that, for large $n$,
\begin{align}\label{boundonrational}
\frac{1}{(x^2+4n^2\pi^2)^{z+\frac{1}{2}}}&=\frac{1}{(2n\pi)^{2z+1}}\left(1+\frac{x^2}{4n^2\pi^2}\right)^{-z-\frac{1}{2}}\nonumber\\
&=\frac{1}{(2n\pi)^{2z+1}}\sum_{m=0}^\infty\binom{-z-\frac{1}{2}}{m}\left(\frac{x^2}{4n^2\pi^2}\right)^m\nonumber\\
&=\frac{1}{(2n\pi)^{2z+1}}\sum_{m=0}^{M-1}\binom{-z-\frac{1}{2}}{m}\left(\frac{x^2}{4n^2\pi^2}\right)^m+O\left(\frac{1}{n^{2z+2M+1}}\right).
\end{align}
Now observe that
{\allowdisplaybreaks\begin{align}\label{abound}
\sum_{n=1}^\infty\frac{\mathcal{A}(n,z,w,x)}{(x^2+4n^2\pi^2)^{z+\frac{1}{2}}}&=\sum_{n=1}^\infty \mathcal{A}(n,z,w,x)\Bigg\{\frac{1}{(x^2+4n^2\pi^2)^{z+\frac{1}{2}}}-\frac{1}{(2n\pi)^{2z+1}}\sum_{m=0}^{M-1}\binom{-z-\frac{1}{2}}{m}\nonumber\\
&\qquad\times\left(\frac{x^2}{4n^2\pi^2}\right)^m+\frac{1}{(2n\pi)^{2z+1}}\sum_{m=0}^{M-1}\binom{-z-\frac{1}{2}}{m}\left(\frac{x^2}{4n^2\pi^2}\right)^m\Bigg\}\nonumber\\
&=\sum_{n=1}^\infty \mathcal{A}(n,z,w,x)\Bigg\{\frac{1}{(x^2+4n^2\pi^2)^{z+\frac{1}{2}}}-\sum_{m=0}^{M-1}\binom{-z-\frac{1}{2}}{m}\frac{x^{2m}}{(2n\pi)^{2z+2m+1}}\Bigg\}\nonumber\\
&\qquad+\sum_{n=1}^\infty \mathcal{A}(n,z,w,x)\sum_{m=0}^{M-1}\binom{-z-\frac{1}{2}}{m}\frac{x^{2m}}{(2n\pi)^{2z+2m+1}}\nonumber\\
&=\sum_{n=1}^\infty \mathcal{A}(n,z,w,x)\Bigg\{\frac{1}{(x^2+4n^2\pi^2)^{z+\frac{1}{2}}}-\sum_{m=0}^{M-1}\binom{-z-\frac{1}{2}}{m}\frac{x^{2m}}{(2n\pi)^{2z+2m+1}}\Bigg\}\nonumber\\
&\qquad+\sum_{m=0}^{M-1}\binom{-z-\frac{1}{2}}{m}\frac{x^{2m}}{(2\pi)^{2z+2m+1}}\sum_{n=1}^\infty \frac{\mathcal{A}(n,z,w,x)}{n^{2z+2m+1}}.
\end{align}}
Substitute \eqref{abound} in \eqref{withkzw} to obtain
\begin{align}\label{done}
&2\sum_{n=1}^\infty\left(\frac{nx}{2}\right)^zK_{z,w}(nx)+\frac{1}{2}\Gamma(z){}_1F_1\left(z;\frac{1}{2};-\frac{w^2}{4}\right)-\frac{\sqrt{\pi}}{x}\Gamma\left(\frac{1}{2}+z\right)e^{-\frac{w^2}{4}}{}_1F_1\left(\frac{1}{2}+z;\frac{1}{2};-\frac{w^2}{4}\right)\nonumber\\
&=\sqrt{\pi}x^{2z}\Gamma\left(\frac{1}{2}+z\right)e^{-\frac{w^2}{4}}\Bigg[\sum_{n=1}^\infty \mathcal{A}(n,z,w,x)\Bigg\{\frac{1}{(x^2+4n^2\pi^2)^{z+\frac{1}{2}}}-\sum_{m=0}^{M-1}\binom{-z-\frac{1}{2}}{m}\frac{x^{2m}}{(2n\pi)^{2z+2m+1}}\Bigg\}\nonumber\\
&\qquad+\sum_{m=0}^{M-1}\binom{-z-\frac{1}{2}}{m}\frac{x^{2m}}{(2\pi)^{2z+2m+1}}\sum_{n=1}^\infty \frac{\mathcal{A}(n,z,w,x)}{n^{2z+2m+1}}\Bigg].
\end{align}
By employing the asymptotic expansion of $K_{z,w}(x)$, given in \cite[Theorem 1.12]{dkmm}, it is easy to see that as the series on the right-hand side of \eqref{done} converges uniformly as a function of $z$ and its summand is also analytic, therefore, by using Weierstrass' theorem on analytic function it represents an analytic function in Re$(z)>-M$. Hence the right-hand side of \eqref{done} is analytic in $z$ in Re$(z)>-M$.

By using \eqref{boundonrational} and the fact $\mathcal{A}(n,z,w,x)=O(1)$, for large $n$, we get 
\begin{align}\label{bigoh}
\mathcal{A}(n,z,w,x)\Bigg\{\frac{1}{(x^2+4n^2\pi^2)^{z+\frac{1}{2}}}-\sum_{m=0}^{M-1}\binom{-z-\frac{1}{2}}{m}\frac{x^{2m}}{(2n\pi)^{2z+2m+1}}\Bigg\}=O\left(\frac{1}{n^{2z+2M+1}}\right).
\end{align}
Upon employing \eqref{bigoh} is it is easy to see that the infinite series on the left-hand side of \eqref{done} is uniformly convergent as a function of $z$ in Re$(z)>-M$. The summand of this series is also analytic in this region. Therefore, the left-hand side of \eqref{done} represents an analytic function of $z$ in Re$(z)>-M$. Now results follows by the principle of analytic continuation for the conditions imposed in the hypotheses of the theorem.
\end{proof}

\begin{proof}[Corollary \textup{\ref{spwatson}}][]
Let $w=0$ in Theorem \ref{continuationwithkzw}. Use the fact $K_{z,0}(x)=K_{z}(x)$ and $ \mathcal{A}(n,z,0,x)=2$ to arrive at \eqref{spwatsoneqn}.
\end{proof}

\section{A new proof of Dixit-Kesarwani-Moll's generalization of the Ramanujan-Guinand formula}\label{newproof}
 
 In this section, as an application of our Theorem \ref{withkzw}, we provide a new proof of Dixit-Kesarwani-Moll's generalization of the Ramanujan-Guinand formula, that is, Theorem \ref{dkm}.

\begin{proof}[Theorem \textup{\ref{dkm}}][]
Using the definition of the divisor function $\sigma_{-z}(n)=\sum_{d|n}d^{-z}$, note that
\begin{align}\label{beforewatson}
\sqrt{\alpha}\sum_{n=1}^\infty \sigma_{-z}(n)n^{\frac{z}{2}}e^{-\frac{w^2}{4}}K_{\frac{z}{2},iw}(2n\alpha)&=\sqrt{\alpha}e^{-\frac{w^2}{4}}\sum_{n=1}^\infty \sum_{d|n}d^{-z}n^{\frac{z}{2}}K_{\frac{z}{2},iw}(2n\alpha)\nonumber\\
&=\sqrt{\alpha}e^{-\frac{w^2}{4}}\sum_{d=1}^\infty d^{-\frac{z}{2}}\sum_{k=1}^\infty k^{\frac{z}{2}}K_{\frac{z}{2},iw}(2kd\alpha).
\end{align}
Let $x=2d\alpha$ and replace $w$ by $iw$ in Theorem \ref{withkzw} to see 
{\allowdisplaybreaks\begin{align}\label{lemmaequ}
\sum_{n=1}^\infty n^{\frac{z}{2}}K_{\frac{z}{2},iw}(2nd\alpha)&=-\frac{1}{4}\frac{\alpha^{-\frac{z}{2}}}{d^{\frac{z}{2}}}\Gamma\left(\frac{z}{2}\right){}_1F_1\left(\frac{z}{2};\frac{1}{2};\frac{w^2}{4}\right)+\frac{1}{4}\sqrt{\pi}e^{\frac{w^2}{4}}\frac{\alpha^{-\frac{z}{2}-1}}{d^{\frac{z}{2}+1}}\Gamma\left(\frac{z+1}{2}\right)\nonumber\\
&\times{}_1F_1\left(\frac{1+z}{2};\frac{1}{2};\frac{w^2}{4}\right)+\frac{\sqrt{\pi}}{2^{1-z}}\frac{\alpha^{\frac{z}{2}}}{d^{-\frac{z}{2}}}\Gamma\left(\frac{1+z}{2}\right)e^{\frac{w^2}{4}}\sum_{n=1}^\infty\frac{\mathcal{A}\left(n,\frac{z}{2},iw,2d\alpha\right)}{(4d^2\alpha^2+4\pi^2n^2)^{\frac{z+1}{2}}}.
\end{align}}
Substitute value from \eqref{lemmaequ} in \eqref{beforewatson} to conclude 
\begin{align}
\sqrt{\alpha}\sum_{n=1}^\infty \sigma_{-z}(n)n^{\frac{z}{2}}e^{-\frac{w^2}{4}}K_{\frac{z}{2},iw}(2n\alpha)&=\sqrt{\alpha}e^{-\frac{w^2}{4}}\Bigg\{-\frac{1}{4}\alpha^{-\frac{z}{2}}\Gamma\left(\frac{z}{2}\right){}_1F_1\left(\frac{z}{2};\frac{1}{2};\frac{w^2}{4}\right)\sum_{d=1}^\infty\frac{1}{d^{z}}\nonumber\\
&\quad+\frac{1}{4}\sqrt{\pi}e^{\frac{w^2}{4}}\alpha^{-\frac{z}{2}-1}\Gamma\left(\frac{z+1}{2}\right){}_1F_1\left(\frac{1+z}{2};\frac{1}{2};\frac{w^2}{4}\right)\sum_{d=1}^\infty\frac{1}{d^{z+1}}\nonumber\\
&\quad+\frac{1}{4}\sqrt{\pi}e^{\frac{w^2}{4}}\alpha^{\frac{z}{2}}\Gamma\left(\frac{1+z}{2}\right)\sum_{d=1}^\infty\sum_{n=1}^\infty\frac{\mathcal{A}\left(n,\frac{z}{2},iw,2d\alpha\right)}{(d^2\alpha^2+\pi^2n^2)^{\frac{z+1}{2}}}\Bigg\}.\nonumber
\end{align}
Use the definition of the Riemann zeta function $\zeta(z)$ in the above equation to get
\begin{align}\label{alphaform}
\sqrt{\alpha}\sum_{n=1}^\infty \sigma_{-z}(n)n^{\frac{z}{2}}e^{-\frac{w^2}{4}}K_{\frac{z}{2},iw}(2n\alpha)&=-\frac{1}{4}\alpha^{\frac{1-z}{2}}\Gamma\left(\frac{z}{2}\right)\zeta(z)e^{-\frac{w^2}{4}}{}_1F_1\left(\frac{z}{2};\frac{1}{2};\frac{w^2}{4}\right)\nonumber\\
&\quad+\frac{1}{4}\sqrt{\pi}\alpha^{\frac{-z-1}{2}}\Gamma\left(\frac{z+1}{2}\right)\zeta(z+1){}_1F_1\left(\frac{1+z}{2};\frac{1}{2};\frac{w^2}{4}\right)\nonumber\\
&\quad+\frac{1}{4}\sqrt{\pi}\alpha^{\frac{z+1}{2}}\Gamma\left(\frac{1+z}{2}\right)\sum_{d=1}^\infty\sum_{n=1}^\infty\frac{\mathcal{A}\left(n,\frac{z}{2},iw,2d\alpha\right)}{(d^2\alpha^2+\pi^2n^2)^{\frac{z+1}{2}}}.
\end{align}
Upon using the fact $\alpha\beta=\pi^2$ in the last term of \eqref{alphaform}, we find that
\begin{align}\label{1alpha}
\sqrt{\alpha}\sum_{n=1}^\infty \sigma_{-z}(n)n^{\frac{z}{2}}e^{-\frac{w^2}{4}}K_{\frac{z}{2},iw}(2n\alpha)&=-\frac{1}{4}\alpha^{\frac{1-z}{2}}\Gamma\left(\frac{z}{2}\right)\zeta(z)e^{-\frac{w^2}{4}}{}_1F_1\left(\frac{z}{2};\frac{1}{2};\frac{w^2}{4}\right)\nonumber\\
&\quad+\frac{1}{4}\sqrt{\pi}\alpha^{\frac{-z-1}{2}}\Gamma\left(\frac{z+1}{2}\right)\zeta(z+1){}_1F_1\left(\frac{1+z}{2};\frac{1}{2};\frac{w^2}{4}\right)\nonumber\\
&\quad+\frac{1}{4}\sqrt{\pi}\beta^{\frac{z+1}{2}}\Gamma\left(\frac{1+z}{2}\right)\sum_{d=1}^\infty\sum_{n=1}^\infty\frac{\mathcal{A}\left(n,\frac{z}{2},iw,2d\alpha\right)}{(d^2\pi^2+n^2\beta^2)^{\frac{z+1}{2}}}.
\end{align}
Replace $\alpha$ by $\beta$ and $w$ by $iw$ in \eqref{alphaform} so that
{\allowdisplaybreaks\begin{align}\label{betaform}
\sqrt{\beta}\sum_{n=1}^\infty \sigma_{-z}(n)n^{\frac{z}{2}}e^{\frac{w^2}{4}}K_{\frac{z}{2},w}(2n\beta)&=-\frac{1}{4}\beta^{\frac{1-z}{2}}\Gamma\left(\frac{z}{2}\right)\zeta(z)e^{\frac{w^2}{4}}{}_1F_1\left(\frac{z}{2};\frac{1}{2};-\frac{w^2}{4}\right)\nonumber\\
&\quad+\frac{1}{4}\sqrt{\pi}\beta^{\frac{-z-1}{2}}\Gamma\left(\frac{z+1}{2}\right)\zeta(z+1){}_1F_1\left(\frac{1+z}{2};\frac{1}{2};-\frac{w^2}{4}\right)\nonumber\\
&\quad+\frac{1}{4}\sqrt{\pi}\beta^{\frac{z+1}{2}}\Gamma\left(\frac{1+z}{2}\right)\sum_{d=1}^\infty\sum_{n=1}^\infty\frac{\mathcal{A}\left(n,\frac{z}{2},w,2d\beta\right)}{(n^2\pi^2+d^2\beta^2)^{\frac{z+1}{2}}},
\end{align}}
where we used the facts that $K_{\frac{z}{2},w}(2n\beta)$ and $\mathcal{A}\left(n,\frac{z}{2},w,2d\beta\right)$ are even functions of $w$. Now interchange the role of the variables $n$ and $d$ in the last term of \eqref{betaform} and then interchange the order of summation to get
{\allowdisplaybreaks\begin{align}\label{1betaform}
\sqrt{\beta}\sum_{n=1}^\infty \sigma_{-z}(n)n^{\frac{z}{2}}e^{\frac{w^2}{4}}K_{\frac{z}{2},w}(2n\beta)&=-\frac{1}{4}\beta^{\frac{1-z}{2}}\Gamma\left(\frac{z}{2}\right)\zeta(z)e^{\frac{w^2}{4}}{}_1F_1\left(\frac{z}{2};\frac{1}{2};-\frac{w^2}{4}\right)\nonumber\\
&\quad+\frac{1}{4}\sqrt{\pi}\beta^{\frac{-z-1}{2}}\Gamma\left(\frac{z+1}{2}\right)\zeta(z+1){}_1F_1\left(\frac{1+z}{2};\frac{1}{2};-\frac{w^2}{4}\right)\nonumber\\
&\quad+\frac{1}{4}\sqrt{\pi}\beta^{\frac{z+1}{2}}\Gamma\left(\frac{1+z}{2}\right)\sum_{d=1}^\infty\sum_{n=1}^\infty\frac{\mathcal{A}\left(d,\frac{z}{2},w,2n\beta\right)}{(d^2\pi^2+n^2\beta^2)^{\frac{z+1}{2}}}.
\end{align}}
From the definition \eqref{Ax} of $\mathcal{A}\left(n,\frac{z}{2},iw,2d\alpha\right)$ and the fact $\alpha\beta=\pi^2$, we have
{\allowdisplaybreaks\begin{align}\label{Asymm}
\mathcal{A}\left(n,\frac{z}{2},iw,2d\alpha\right)&={}_1F_1\left(\frac{1}{2}+z;\frac{1}{2};-\frac{w^2(2n\pi+2id\alpha)}{8n\pi-8id\alpha}\right)+{}_1F_1\left(\frac{1}{2}+z;\frac{1}{2};-\frac{w^2(2n\pi-2id\alpha)}{8n\pi+8id\alpha}\right)\nonumber\\
&={}_1F_1\left(\frac{1}{2}+z;\frac{1}{2};-\frac{w^2(2n\pi+2id\pi^2/\beta)}{8n\pi-8id\pi^2/\beta}\right)+{}_1F_1\left(\frac{1}{2}+z;\frac{1}{2};-\frac{w^2(2n\pi-2id\pi^2/\beta)}{8n\pi+8id\pi^2/\beta}\right)\nonumber\\
&={}_1F_1\left(\frac{1}{2}+z;\frac{1}{2};-\frac{w^2(2n\beta+2id\pi)}{8n\beta-8id\pi}\right)+{}_1F_1\left(\frac{1}{2}+z;\frac{1}{2};-\frac{w^2(2n\beta-2id\pi)}{8n\beta+8id\pi}\right)\nonumber\\
&={}_1F_1\left(\frac{1}{2}+z;\frac{1}{2};-\frac{w^2(2n\beta i-2d\pi)}{8n\beta i+8d\pi}\right)+{}_1F_1\left(\frac{1}{2}+z;\frac{1}{2};-\frac{w^2(2n\beta i+2d\pi)}{8n\beta i-8d\pi}\right)\nonumber\\
&={}_1F_1\left(\frac{1}{2}+z;\frac{1}{2};\frac{w^2(2d\pi-2n\beta i)}{8n\beta i+8d\pi}\right)+{}_1F_1\left(\frac{1}{2}+z;\frac{1}{2};\frac{w^2(2n\beta i+2d\pi)}{8d\pi-8n\beta i}\right)\nonumber\\
&=\mathcal{A}\left(d,\frac{z}{2},w,2n\beta\right).
\end{align}}
Employ \eqref{Asymm} in \eqref{1betaform} and then subtract the resulting expression from \eqref{1alpha} and simplify to arrive at
\begin{align}\label{almostthere}
&\sqrt{\alpha}\sum_{n=1}^\infty \sigma_{-z}(n)n^{\frac{z}{2}}e^{-\frac{w^2}{4}}K_{\frac{z}{2},iw}(2n\alpha)-\sqrt{\beta}\sum_{n=1}^\infty \sigma_{-z}(n)n^{\frac{z}{2}}e^{\frac{w^2}{4}}K_{\frac{z}{2},w}(2n\beta)\nonumber\\
&=-\frac{1}{4}\alpha^{\frac{1-z}{2}}\Gamma\left(\frac{z}{2}\right)\zeta(z)e^{-\frac{w^2}{4}}{}_1F_1\left(\frac{z}{2};\frac{1}{2};\frac{w^2}{4}\right)+\frac{1}{4}\sqrt{\pi}\alpha^{\frac{-z-1}{2}}\Gamma\left(\frac{z+1}{2}\right)\zeta(z+1){}_1F_1\left(\frac{1+z}{2};\frac{1}{2};\frac{w^2}{4}\right)\nonumber\\
&+\frac{1}{4}\beta^{\frac{1-z}{2}}\Gamma\left(\frac{z}{2}\right)\zeta(z)e^{\frac{w^2}{4}}{}_1F_1\left(\frac{z}{2};\frac{1}{2};-\frac{w^2}{4}\right)-\frac{1}{4}\sqrt{\pi}\beta^{\frac{-z-1}{2}}\Gamma\left(\frac{z+1}{2}\right)\zeta(z+1){}_1F_1\left(\frac{1+z}{2};\frac{1}{2};-\frac{w^2}{4}\right).
\end{align}
We have Kummer's transformation \cite[P.~173, Equation (7.5)]{temme}
\begin{align}\label{kummar}
{}_1F_1(b;c;z)=e^{z}{}_1F_1(c-b;c;-z).
\end{align}
Use \eqref{kummar} twice in \eqref{almostthere} to find
\begin{align}\label{almostthere1}
&\sqrt{\alpha}\sum_{n=1}^\infty \sigma_{-z}(n)n^{\frac{z}{2}}e^{-\frac{w^2}{4}}K_{\frac{z}{2},iw}(2n\alpha)-\sqrt{\beta}\sum_{n=1}^\infty \sigma_{-z}(n)n^{\frac{z}{2}}e^{\frac{w^2}{4}}K_{\frac{z}{2},w}(2n\beta)\nonumber\\
&=-\frac{1}{4}\alpha^{\frac{1-z}{2}}\Gamma\left(\frac{z}{2}\right)\zeta(z){}_1F_1\left(\frac{1-z}{2};\frac{1}{2};-\frac{w^2}{4}\right)+\frac{1}{4}\sqrt{\pi}\alpha^{\frac{-z-1}{2}}\Gamma\left(\frac{z+1}{2}\right)\zeta(z+1){}_1F_1\left(\frac{1+z}{2};\frac{1}{2};\frac{w^2}{4}\right)\nonumber\\
&\quad+\frac{1}{4}\beta^{\frac{1-z}{2}}\Gamma\left(\frac{z}{2}\right)\zeta(z){}_1F_1\left(\frac{z}{2};\frac{1}{2};\frac{w^2}{4}\right)-\frac{1}{4}\sqrt{\pi}\beta^{\frac{-z-1}{2}}\Gamma\left(\frac{z+1}{2}\right)\zeta(z+1){}_1F_1\left(\frac{1+z}{2};\frac{1}{2};-\frac{w^2}{4}\right).
\end{align}
Now invoke the functional equation of $\zeta(s)$ \cite[p.22, Equation (2.6.40]{titch}
\begin{align*}
\pi^{-\frac{s}{2}}\Gamma\left(\frac{s}{2}\right)\zeta(s)=\pi^{-\frac{1-s}{2}}\Gamma\left(\frac{1-s}{2}\right)\zeta(1-s),
\end{align*}
along with letting $s=z+1$ and use the hypothesis $\alpha\beta=\pi^2$ so that
\begin{align*}
&\frac{1}{4}\sqrt{\pi}\alpha^{\frac{-z-1}{2}}\Gamma\left(\frac{z+1}{2}\right)\zeta(z+1){}_1F_1\left(\frac{1+z}{2};\frac{1}{2};\frac{w^2}{4}\right)-\frac{1}{4}\sqrt{\pi}\beta^{\frac{-z-1}{2}}\Gamma\left(\frac{z+1}{2}\right)\zeta(z+1)\nonumber\\
&\qquad\times{}_1F_1\left(\frac{1+z}{2};\frac{1}{2};-\frac{w^2}{4}\right)\nonumber\\
&=\frac{1}{4}\beta^{\frac{z+1}{2}}\Gamma\left(-\frac{z}{2}\right)\zeta(-z){}_1F_1\left(\frac{1+z}{2};\frac{1}{2};\frac{w^2}{4}\right)-\frac{1}{4}\alpha^{\frac{z+1}{2}}\Gamma\left(-\frac{z}{2}\right)\zeta(-z){}_1F_1\left(\frac{1+z}{2};\frac{1}{2};-\frac{w^2}{4}\right).
\end{align*}
Substitute the above value in \eqref{almostthere1} so as to deduce that
\begin{align*}
&\sqrt{\alpha}\sum_{n=1}^\infty \sigma_{-z}(n)n^{\frac{z}{2}}e^{-\frac{w^2}{4}}K_{\frac{z}{2},iw}(2n\alpha)-\sqrt{\beta}\sum_{n=1}^\infty \sigma_{-z}(n)n^{\frac{z}{2}}e^{\frac{w^2}{4}}K_{\frac{z}{2},w}(2n\beta)\nonumber\\
&=-\frac{1}{4}\alpha^{\frac{1-z}{2}}\Gamma\left(\frac{z}{2}\right)\zeta(z){}_1F_1\left(\frac{1-z}{2};\frac{1}{2};-\frac{w^2}{4}\right)+\frac{1}{4}\beta^{\frac{z+1}{2}}\Gamma\left(-\frac{z}{2}\right)\zeta(-z){}_1F_1\left(\frac{1+z}{2};\frac{1}{2};\frac{w^2}{4}\right)\nonumber\\
&\quad+\frac{1}{4}\beta^{\frac{1-z}{2}}\Gamma\left(\frac{z}{2}\right)\zeta(z){}_1F_1\left(\frac{z}{2};\frac{1}{2};\frac{w^2}{4}\right)-\frac{1}{4}\alpha^{\frac{z+1}{2}}\Gamma\left(-\frac{z}{2}\right)\zeta(-z){}_1F_1\left(\frac{1+z}{2};\frac{1}{2};-\frac{w^2}{4}\right).
\end{align*}
Now simplify the above equation to prove \eqref{dkmeqn}.
\end{proof}

\section{Generalization of Watson's result in the setting of ${}_{\mu}K_{z}(x,\lambda)$}\label{swithmuknu}

This section is devoted to proving Theorem \ref{withmuknu}. However, to prove it we need to evaluate the following integral involving the generalized modified Bessel function ${}_{\mu}K_{z}(x, \lambda)$.

\begin{lemma}\label{intmuknuw}
Let $\mathrm{Re}(z)>0$ and $x, \mu, \lambda\in\mathbb{C}$ such that $\mathrm{Re}(\mu+\lambda)>0$. Then for $a\geq0$, we have
\begin{align}
&\int_0^\infty \left(\frac{tx}{2}\right)^{z-\lambda}{}_\mu K_z(tx,\lambda)\cos(at)\ dt\nonumber\\
&=\frac{\sqrt{\pi}2^{\mu+z+\lambda-1}x^{2z}}{a^{2z+1}}\frac{\Gamma\left(\frac{1}{2}+z\right)\Gamma\left(\frac{1}{2}+\mu+z+\lambda\right)}{\Gamma\left(\frac{1}{2}+\lambda\right)}\pFq{2}{1}{\frac{1}{2}+z,\frac{1}{2}+\lambda+\mu+z}{\frac{1}{2}+\lambda}{-\frac{x^2}{a^2}}.\nonumber 
\end{align}
\end{lemma}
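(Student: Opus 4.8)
The plan is to compute the cosine transform through the Mellin transform of the kernel by a Parseval-type argument, rather than through an explicit integral representation as in the proof of Lemma \ref{kzwintevaluation} (there is no elementary representation of ${}_\mu K_z(x,\lambda)$ analogous to \eqref{intkzw} at our disposal). Write $f(t)=\left(\tfrac{tx}{2}\right)^{z-\lambda}{}_\mu K_z(tx,\lambda)$ and set $M(w):=\int_0^\infty y^{w-1}{}_\mu K_z(y,\lambda)\,dy$. The substitution $y=tx$ gives $\int_0^\infty t^{s-1}f(t)\,dt=2^{\lambda-z}x^{-s}M(s+z-\lambda)$, so everything reduces to knowing $M$.

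First I would determine $M(w)$ directly from the definition \eqref{muknuwdef}. Taking the Mellin transform of each of its two ${}_1F_2$-pieces (each continued meromorphically in $w$, their separate growth at infinity cancelling in the decaying combination ${}_\mu K_z$) and collapsing the resulting products of gamma factors via the reflection formula $\Gamma(u)\Gamma(1-u)=\pi/\sin(\pi u)$ and the duplication formula, one is led to
\begin{equation*}
M(w)=2^{\mu+w+z+\lambda-2}\,\Gamma\!\Big(\tfrac{w+\lambda+z}{2}\Big)\Gamma\!\Big(\tfrac{w+\lambda-z}{2}\Big)\frac{\Gamma\!\big(\mu+\tfrac12+\tfrac{z+\lambda-w}{2}\big)}{\Gamma\!\big(\tfrac12+\tfrac{\lambda-z-w}{2}\big)} .
\end{equation*}
This can be checked against ${}_{-z}K_z(y,\lambda)=y^\lambda K_z(y)$, where it collapses to the classical $2^{w-2}\Gamma(\tfrac{w-z}{2})\Gamma(\tfrac{w+z}{2})$, and against the residue at $w=z-\lambda$, which must reproduce the leading small-$y$ term of \eqref{muknuwdef}.

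Next, using $\int_0^\infty t^{s-1}\cos(at)\,dt=a^{-s}\Gamma(s)\cos(\tfrac{\pi s}{2})$ for $0<\Re(s)<1$ together with the Mellin--Parseval formula, I would write, for $c$ in the common strip of holomorphy,
\begin{equation*}
\int_0^\infty f(t)\cos(at)\,dt=\frac{1}{2\pi i}\int_{(c)}2^{\lambda-z}x^{-s}M(s+z-\lambda)\,a^{s-1}\Gamma(1-s)\sin\!\Big(\tfrac{\pi s}{2}\Big)\,ds .
\end{equation*}
Then I would push the line of integration to $\Re(s)\to-\infty$ and collect residues. Inside $M(s+z-\lambda)$ the factor $\Gamma(s/2)$ has poles at $s=-2k$, but these are annihilated exactly by the zeros of $\sin(\pi s/2)$, while the poles of $\Gamma(1-s)$ lie to the right of $c$; hence the only poles crossed are the simple ones of $\Gamma\!\big(\tfrac s2+z\big)$ at $s=-2z-2k$, $k\ge 0$. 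Evaluating these residues — simplifying $\Gamma(-z-k)$ by reflection, $\Gamma(1+2z+2k)$ by duplication, and $\sin(\pi s/2)$ at $s=-2z-2k$ — yields the $k$-th term
\begin{equation*}
\sqrt\pi\,2^{\mu+\lambda+z-1}\frac{\Gamma(\tfrac12+z)\,\Gamma(\tfrac12+\lambda+\mu+z)}{\Gamma(\tfrac12+\lambda)}\,\frac{(\tfrac12+z)_k(\tfrac12+\lambda+\mu+z)_k}{(\tfrac12+\lambda)_k\,k!}\Big(-\tfrac{x^2}{a^2}\Big)^k\frac{x^{2z}}{a^{2z+1}},
\end{equation*}
whose sum over $k$ is exactly the asserted ${}_2F_1$, valid first for $|x/a|<1$ and then for all admissible $a$ by analytic continuation.

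The two steps needing genuine care are the rigorous evaluation of $M(w)$ from \eqref{muknuwdef} — the two ${}_1F_2$-series diverge separately, so their termwise Mellin transforms must be read as analytic continuations (or imported from the Mellin--Barnes description of ${}_\mu K_z$ in \cite{dkk}) — and the Stirling estimate showing that the shifted contour contributes nothing, so that the integral equals the sum of residues. I expect the former to be the main obstacle, the latter being routine.
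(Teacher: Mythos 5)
Your proposal is correct, and it is a genuinely different route from the paper's. The paper proves the lemma by inserting the ${}_1F_2$-series definition \eqref{muknuwdef} of ${}_\mu K_z(tx,\lambda)$, splitting the integral into two pieces, and evaluating each by the tabulated transform \cite[Equation (7.542.3)]{grn} specialized to $\nu=\tfrac12$ (so that $Y_{1/2}$ becomes a cosine): the first piece vanishes identically because the factor $\cos\left(\tfrac{\pi}{2}(\sigma-\tfrac12)\right)$ is zero at $\sigma=\tfrac32$, while the second produces a ${}_3F_2$ in which the parameter $1+z$ cancels, leaving exactly the stated ${}_2F_1$. Your Mellin--Parseval argument replaces the table lookup by a contour shift, and I have checked the details: your claimed $M(w)$ is precisely the Mellin--Barnes kernel by which ${}_\mu K_z(x,\lambda)$ is defined in \cite{dkk} (it passes both of your consistency checks, and its residue at $w=z-\lambda$ reproduces the leading term $2^{\mu+2z-1}\Gamma(z)\Gamma(\mu+\lambda+\tfrac12)x^{\lambda-z}/\Gamma(\lambda+\tfrac12-z)$ of \eqref{muknuwdef}), so the step you flag as the ``main obstacle'' disappears: you may simply cite \cite{dkk} rather than re-derive $M(w)$ from the divergent ${}_1F_2$ split. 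Your residue bookkeeping is also right --- writing $M(s+z-\lambda)=2^{\mu+s+2z-2}\Gamma\left(\tfrac s2+z\right)\Gamma\left(\tfrac s2\right)\Gamma\left(\mu+\lambda+\tfrac12-\tfrac s2\right)/\Gamma\left(\lambda-z+\tfrac12-\tfrac s2\right)$, the poles of $\Gamma(\tfrac s2)$ are indeed killed by $\sin(\tfrac{\pi s}{2})$, and the residues at $s=-2z-2k$, after reflection on $\Gamma(-z-k)$ and duplication on $\Gamma(1+2z+2k)$, give exactly your $k$-th term, summing to the asserted ${}_2F_1$. What each approach buys: the paper's proof is short but leans on a Gradshteyn--Ryzhik entry applied to two pieces whose separate integrals diverge classically (each ${}_1F_2$ grows exponentially), so it is implicitly an analytic-continuation argument with the continuation hidden inside the table entry (whose sign the paper even has to correct in a footnote); your proof makes the continuation explicit, transparently explains the provenance of the condition $|x|<a$ (removed afterwards by continuing the ${}_2F_1$ in $a$), and in effect re-proves the relevant G--R entry. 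One small point common to both treatments: at $a=0$ the right-hand side must be read as a limit (your contour argument, like the paper's formula \eqref{nuhalf} at $y=0$, degenerates there), which is how the paper uses it in \eqref{f}.
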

\begin{proof}
From \eqref{muknuwdef} employ the definition  of  ${}_\mu K_z(x,\lambda)$,
\begin{align}\label{split}
&\int_0^\infty \left(\frac{tx}{2}\right)^{z-\lambda}{}_\mu K_z(tx,\lambda)\cos(at)\ dt\nonumber\\
&=\frac{\pi 2^{\mu+\lambda+z-1}\Gamma\left(\mu+\lambda+\frac{1}{2}\right)}{\Gamma\left(\frac{1}{2}+\lambda-z\right)\Gamma\left(1-z\right)}\int_0^\infty\cos(at){}_1F_2\left(\mu+\lambda+\frac{1}{2};\frac{1}{2}+\lambda-z;1-z;\frac{(tx)^2}{4}\right)\ dt \nonumber\\
&-\frac{\pi x^{2z}2^{\mu+\lambda-z-1}\Gamma\left(\mu+z+\lambda+\frac{1}{2}\right)}{\sin(\pi z)\Gamma(1+z)\Gamma\left(\lambda+\frac{1}{2}\right)}\int_0^\infty t^{2z}\cos(at){}_1F_2\left(\mu+z+\lambda+\frac{1}{2};\lambda+\frac{1}{2},1+z;\frac{(tx)^2}{4}\right)\ dt.
\end{align}
From \cite[p.~818, Equation (7.542.3)]{grn}\footnote{minus sign in the argument of the ${}_pF_q$ on the right-hand side must replaced by the plus sign}, for $y\geq0, p\leq q-1$ and Re$(\sigma)>|\mathrm{Re}(\nu)|$, we have
\begin{align}\label{gennu}
&\int_0^\infty t^{\sigma-1}{}_pF_q(a_1,...,a_p;b_1,...,b_q;-\eta t^2)Y_\nu(yt)\ dt\nonumber\\
&=-\frac{2^{\sigma-1}}{\pi}y^{-\sigma}\cos\left(\frac{\pi}{2}(\sigma-\nu)\right)\Gamma\left(\frac{\sigma+\nu}{2}\right)\Gamma\left(\frac{\sigma-\nu}{2}\right){}_{p+2}F_q\left(a_1,...,a_p,\frac{\sigma+\nu}{2},\frac{\sigma-\nu}{2};b_1,...,b_q;\frac{4\eta}{y^2}\right).
\end{align}
Let $p=1,\ q=2,\ \nu=\frac{1}{2}$ in \eqref{gennu} and the fact that $Y_{\frac{1}{2}}(x)=-\sqrt{\frac{\pi}{2x}}\cos(x)$ to find
\begin{align}\label{nuhalf}
&\int_0^\infty t^{\sigma-\frac{3}{2}}{}_1F_2(a_1;b_1,b_2;-\eta t^2)\cos(yt)\ dt\nonumber\\
&=\frac{2^{\sigma-\frac{3}{2}}}{\sqrt{\pi}}y^{-\sigma}\cos\left(\frac{\pi}{2}\left(\sigma-\frac{1}{2}\right)\right)\Gamma\left(\frac{\sigma+1/2}{2}\right)\Gamma\left(\frac{\sigma-1/2}{2}\right)\nonumber\\
&\qquad\times{}_{p+2}F_q\left(a_1,\frac{\sigma+1/2}{2},\frac{\sigma-1/2}{2};b_1,b_2;\frac{4\eta}{y^2}\right).
\end{align}
Let $\sigma=\frac{3}{2},\ y=a$ and $a_1=\mu+\lambda+\frac{1}{2},\ b_1=\lambda+\frac{1}{2}-z,\ b_2=1-z$ and $\eta=-\frac{x^2}{4}$ in \eqref{nuhalf}, then note that cosine term on the right-hand side vanishes; therefore, we get
\begin{align}\label{intiszero}
&\int_0^\infty {}_1F_2\left(\mu+\lambda+\frac{1}{2};\lambda+\frac{1}{2}-z,1-z; \frac{x^2t^2}{4}\right)\cos(at)\ dt=0
\end{align}
Now let $\sigma=\frac{3}{2}+2z,\ y=a$ and $a_1=\mu+z+\lambda+\frac{1}{2},\ b_1=\lambda+\frac{1}{2},\ b_2=1+z$ and $\eta=-\frac{x^2}{4}$ in \eqref{nuhalf} to see that
\begin{align}\label{intisnotzero}
&\int_0^\infty t^{2z}{}_1F_2\left(\mu+z+\lambda+\frac{1}{2};\lambda+\frac{1}{2},1+z; \frac{x^2t^2}{4}\right)\cos(at)\ dt\nonumber\\
&=-\frac{\Gamma(2z+1)\sin(\pi z)}{a^{2z+1}}{}_2F_1\left(\frac{1}{2}+z,\frac{1}{2}+\lambda+\mu+z;\frac{1}{2}+\lambda;-\frac{x^2}{a^2}\right).
\end{align}
Lemma \ref{intmuknuw} now follows from \eqref{split}, \eqref{intiszero} and \eqref{intisnotzero}.
\end{proof}

We are now ready to prove Theorem \ref{withmuknu}.
\begin{proof}[Theorem \textup{\ref{withmuknu}}][]
Let 
\begin{align}\label{fmuknuw}
f(t):=\left(\frac{tx}{2}\right)^{z-\lambda}{}_\mu K_z(tx,\lambda).
\end{align}
From \eqref{muknuwdef} and \eqref{fmuknuw}, 
\begin{align}\label{f(t)}
f(t)&=\frac{\pi 2^{\lambda+\mu+z-1}}{\sin(z\pi)}\bigg\{\frac{\Gamma(\mu+\lambda+\tfrac{1}{2})}{\Gamma(1-z)\Gamma(\lambda+\tfrac{1}{2}-z)}\pFq12{\mu+\lambda+\tfrac{1}{2}}{\lambda+\tfrac{1}{2}-z,1-z}{\frac{(xt)^2}{4}}\nonumber\\
&\quad\quad\quad\quad\quad\quad-\left(\frac{xt}{2}\right)^{2z}\frac{\Gamma(\mu+z+\lambda+\tfrac{1}{2})}{\Gamma(1+z)\Gamma(\lambda+\tfrac{1}{2})}\pFq12{\mu+z+\lambda+\tfrac{1}{2}}{\lambda+\tfrac{1}{2},1+z}{\frac{(xt)^2}{4}}\bigg\}.
\end{align}
It is easy to see from the above equation that as $t\to0$, for Re$(z)>0$,
\begin{align}\label{ttends0}
f(t)=O(1).
\end{align}
From \cite[Lemma 7.1]{dkk}, as $t\to\infty$,
\begin{align}\label{tinf}
f(t)=O\left(t^{-2\lambda-2\mu-1}\right).
\end{align}
Now by using \eqref{ttends0} and \eqref{tinf}, it is clear that the integral $\int_0^\infty f(t)\ dt$ exists for Re$(z)>0$ and Re$(\mu+\lambda)>0$. 

Let $t=0$ and use $\Gamma(z)\Gamma(1-z)=\pi/\sin(\pi z)$ in \eqref{f(t)} so that, for Re$(z)>0$,
\begin{align}\label{at0}
f(0)=\frac{2^{\lambda+\mu+z-1}\Gamma(\mu+\lambda+\tfrac{1}{2})\Gamma(z)}{\Gamma(\lambda+\tfrac{1}{2}-z)}.
\end{align}
Employ Lemma \ref{intmuknuw} with $a=0$ and \eqref{fmuknuw} to get
\begin{align}\label{f}
\int_0^\infty f(t)\ dt=\frac{\sqrt{\pi}2^{\mu+z+\lambda-1}\Gamma(\mu+\lambda)\Gamma\left(\frac{1}{2}+z\right)}{x\Gamma(\lambda-z)}.
\end{align}
Again upon invoking Lemma \ref{intmuknuw} with $a=2\pi n$, we have 
\begin{align}\label{fcos}
\int_0^\infty f(t)\cos(2\pi nt)\ dt&=\frac{\sqrt{\pi}2^{\mu+z+\lambda-1}x^{2z}}{(2\pi n)^{2z+1}}\Gamma\left(\frac{1}{2}+z\right)\frac{\Gamma\left(\frac{1}{2}+\mu+z+\lambda\right)}{\Gamma\left(\frac{1}{2}+\lambda\right)}\nonumber\\
&\qquad\times{}_2F_1\left(\frac{1}{2}+z,\frac{1}{2}+\mu+z+\lambda;\frac{1}{2}+\lambda;-\frac{x^2}{4\pi^2n^2}\right).
\end{align}
Substitute values from \eqref{fmuknuw}, \eqref{at0}, \eqref{f} and \eqref{fcos} in Theorem \ref{poisson} and simplify to arrive at \eqref{withmuknueqn}.
\end{proof}

\subsection{Analytic continuation of Theorem \ref{withmuknu}}\label{ssmuknu}

\begin{proof}[Theorem \textup{\ref{muknuwanalytic}}][]
Upon using the series definition of ${}_2F_1$, we see that as $n\to\infty$,
\begin{align}\label{2f1bound}
\pFq{2}{1}{\frac{1}{2}+z,\frac{1}{2}+\lambda+\mu+z}{\frac{1}{2}+\lambda}{-\frac{x^2}{4\pi^2n^2}}&=\sum_{m=0}^M\frac{\left(\frac{1}{2}+z\right)_m\left(\frac{1}{2}+\lambda+\mu+z\right)_m}{m!\left(\frac{1}{2}+\lambda\right)_m}\left({-\frac{x^2}{4\pi^2n^2}}\right)^{-m}\nonumber\\
&\qquad+O\left({n^{-2M}}\right).
\end{align}
Add and subtract $\sum_{m=0}^M\frac{\left(\frac{1}{2}+z\right)_m\left(\frac{1}{2}+\lambda+\mu+z\right)_m}{m!\left(\frac{1}{2}+\lambda\right)_m}\left({-\frac{x^2}{4\pi^2n^2}}\right)^{-m}$ in the first step below to see that
\begin{align}\label{plusminus}
&\sum_{n=1}^\infty\frac{1}{n^{2z+1}}\pFq{2}{1}{\frac{1}{2}+z,\frac{1}{2}+\lambda+\mu+z}{\frac{1}{2}+\lambda}{-\frac{x^2}{4\pi^2n^2}}\nonumber\\
&=\sum_{n=1}^\infty\frac{1}{n^{2z+1}}\Bigg\{\pFq{2}{1}{\frac{1}{2}+z,\frac{1}{2}+\lambda+\mu+z}{\frac{1}{2}+\lambda}{-\frac{x^2}{4\pi^2n^2}}-\sum_{m=0}^{M-1}\frac{\left(\frac{1}{2}+z\right)_m\left(\frac{1}{2}+\lambda+\mu+z\right)_m}{m!\left(\frac{1}{2}+\lambda\right)_m}\nonumber\\
&\qquad\times\left({-\frac{x^2}{4\pi^2n^2}}\right)^{m}\Bigg\}+\sum_{n=1}^\infty\frac{1}{n^{2z+1}}\sum_{m=0}^{M-1}\frac{\left(\frac{1}{2}+z\right)_m\left(\frac{1}{2}+\lambda+\mu+z\right)_m}{m!\left(\frac{1}{2}+\lambda\right)_m}\left({-\frac{x^2}{4\pi^2n^2}}\right)^{m}\nonumber\\
&=\sum_{n=1}^\infty\frac{1}{n^{2z+1}}\Bigg\{\pFq{2}{1}{\frac{1}{2}+z,\frac{1}{2}+\lambda+\mu+z}{\frac{1}{2}+\lambda}{-\frac{x^2}{4\pi^2n^2}}-\sum_{m=0}^{M-1}\frac{\left(\frac{1}{2}+z\right)_m\left(\frac{1}{2}+\lambda+\mu+z\right)_m}{m!\left(\frac{1}{2}+\lambda\right)_m}\nonumber\\
&\qquad\times\left({-\frac{x^2}{4\pi^2n^2}}\right)^{m}\Bigg\}+\sum_{m=0}^{M-1}\frac{\left(\frac{1}{2}+z\right)_m\left(\frac{1}{2}+\lambda+\mu+z\right)_m\zeta(2z+2m+1)}{m!\left(\frac{1}{2}+\lambda\right)_m}\left({-\frac{x^2}{4\pi^2}}\right)^{m}.
\end{align}
Employ \eqref{plusminus} in \eqref{withmuknueqn} to deduce that
\begin{align}\label{afteranalytic}
&2\sum_{n=1}^\infty\left(\frac{nx}{2}\right)^{z-\lambda}{}_{\mu}K_{z}(nx,\lambda)+\frac{2^{z+\mu+\lambda-1}\Gamma(z)\Gamma\left(\frac{1}{2}+\lambda+\mu\right)}{\Gamma\left(\frac{1}{2}+\lambda-z\right)}-\frac{\sqrt{\pi}2^{\mu+z+\lambda}\Gamma(\lambda+\mu)\Gamma\left(\frac{1}{2}+z\right)}{x\Gamma(\lambda-z)}\nonumber\\
&=\frac{2^{\mu+\lambda-z}}{\sqrt{\pi}}\left(\frac{x}{\pi}\right)^{2z}\frac{\Gamma\left(z+\frac{1}{2}\right)\Gamma\left(\frac{1}{2}+\lambda+\mu+z\right)}{\Gamma\left(\frac{1}{2}+\lambda\right)}\sum_{n=1}^\infty\frac{1}{n^{2z+1}}\Bigg\{\pFq{2}{1}{\frac{1}{2}+z,\frac{1}{2}+\lambda+\mu+z}{\frac{1}{2}+\lambda}{-\frac{x^2}{4\pi^2n^2}}\nonumber\\
&\qquad-\sum_{m=0}^{M-1}\frac{\left(\frac{1}{2}+z\right)_m\left(\frac{1}{2}+\lambda+\mu+z\right)_m}{m!\left(\frac{1}{2}+\lambda\right)_m}\left({-\frac{x^2}{4\pi^2n^2}}\right)^{m}\Bigg\}\nonumber\\
&\qquad+\frac{2^{\mu+\lambda-z}}{\sqrt{\pi}}\left(\frac{x}{\pi}\right)^{2z}\sum_{m=0}^{M-1}\frac{\Gamma\left(z+\frac{1}{2}+m\right)\Gamma\left(\frac{1}{2}+\lambda+\mu+z+m\right)\zeta(2z+2m+1)}{m!\Gamma\left(\frac{1}{2}+\lambda+m\right)}\left({-\frac{x^2}{4\pi^2}}\right)^{m}.
\end{align}%
By invoking \cite[Lemma~7.1]{dkk}, it is easy to see that the series on the left-hand side of \eqref{afteranalytic} is uniformly convergent Re$(z)>-M$. Also, the summand of this series is an analytic function, therefore, this series represents an analytic function of $z$ in Re$(z)>-M$ by  Weierstrass' theorem for analytic functions.

By using \eqref{2f1bound} observe that the summand of the infinite series on the right-hand side of \eqref{afteranalytic} is $O\left(n^{-2z-2m-1}\right)$ when $n$ is large. Therefore this series converges uniformly as function of $z$ in Re$(z)>-M$. Since the summand of the infinite series on the right-hand side is analytic in Re$(z)>-M$, we see by Weierstrass' theorem  that this series represents  an analytic function  of $z$  for Re$(z)>-M$. 
 Therefore, by the principle of analytic continuation, we see that \eqref{afteranalytic} holds for Re$(z)>-M$ with having limiting values on the poles at $0,-\frac{1}{2},-1,-\frac{3}{2},...,-M+\frac{1}{2}$ and Re$(x)>0$ and Re$(\mu+\lambda)>0$. This proves the theorem.
\end{proof}

Theorem \ref{nu=0} is proved next.
\begin{proof}[Theorem \textup{\ref{nu=0}}][]
Let $M=1$ in Theorem \ref{muknuwanalytic}. Let $z\to0$ on both sides of \eqref{muknuwanalyticeqn} to see that
{\allowdisplaybreaks\begin{align}\label{limitbefore}
&2\sum_{n=1}^\infty\left(\frac{nx}{2}\right)^{-\lambda}{}_{\mu}K_{0}(nx,\lambda)-\frac{\pi2^{\mu+\lambda}\Gamma(\lambda+\mu)}{x\Gamma(\lambda)}\nonumber\\
&=2^{\mu+\lambda}\frac{\Gamma\left(\frac{1}{2}+\lambda+\mu\right)}{\Gamma\left(\frac{1}{2}+\lambda\right)}\sum_{n=1}^\infty\frac{1}{n}\left\{\pFq{2}{1}{\frac{1}{2},\frac{1}{2}+\lambda+\mu}{\frac{1}{2}+\lambda}{-\frac{x^2}{4\pi^2n^2}}-1\right\}\nonumber\\
&\quad+\lim_{z\to0}\left\{\frac{2^{\mu+\lambda-z}}{\sqrt{\pi}}\left(\frac{x}{\pi}\right)^{2z}\frac{\Gamma\left(z+\frac{1}{2}\right)\Gamma\left(\frac{1}{2}+\lambda+\mu+z\right)\zeta(2z+1)}{\Gamma\left(\frac{1}{2}+\lambda\right)}-\frac{2^{z+\mu+\lambda-1}\Gamma(z)\Gamma\left(\frac{1}{2}+\lambda+\mu\right)}{\Gamma\left(\frac{1}{2}+\lambda-z\right)}\right\}.
\end{align}}
We next evaluate the limit in the above equation. We have the following well-known expansions, as $z\to0$,
{\allowdisplaybreaks\begin{align}
2^{-z}&=1-\log(2)z+O(z^2),\nonumber\\
\left(\frac{x}{2\pi}\right)^{2z}&=1+2z\log\left(\frac{x}{\pi}\right)+O(z^2),\nonumber\\
\Gamma\left(\frac{1}{2}+z\right)&=\sqrt{\pi}\left(1+\psi\left(\frac{1}{2}\right)z\right)+O(z^2),\nonumber\\
\Gamma\left(\frac{1}{2}+\lambda+\mu+z\right)&=\Gamma\left(\frac{1}{2}+\lambda+\mu\right)\left(1+\psi\left(\frac{1}{2}+\lambda+\mu\right)z\right)+O(z^2),\nonumber\\
\zeta(2z+1)&=\frac{1}{2z}+\gamma+O(z).\nonumber
\end{align}}
Also,
\begin{align}
2^z&=1+\log(2)z+O(z^2),\nonumber\\
\Gamma(z)&=\frac{1}{z}-\gamma+O(z),\nonumber\\
\frac{1}{\Gamma\left(\frac{1}{2}+\lambda-z\right)}&=\frac{1}{\Gamma\left(\frac{1}{2}+\lambda\right)}\left(1+\psi\left(\frac{1}{2}+\lambda\right)z\right)+O(z).\nonumber
\end{align}
Use all these expansion to deduce that
\begin{align}\label{limit}
&\lim_{z\to0}\left\{\frac{2^{\mu+\lambda-z}}{\sqrt{\pi}}\left(\frac{x}{\pi}\right)^{2z}\frac{\Gamma\left(z+\frac{1}{2}\right)\Gamma\left(\frac{1}{2}+\lambda+\mu+z\right)\zeta(2z+1)}{\Gamma\left(\frac{1}{2}+\lambda\right)}-\frac{2^{z+\mu+\lambda-1}\Gamma(z)\Gamma\left(\frac{1}{2}+\lambda+\mu\right)}{\Gamma\left(\frac{1}{2}+\lambda-z\right)}\right\}\nonumber\\
&=\frac{2^{\mu+\lambda-1}\Gamma\left(\frac{1}{2}+\lambda+\mu\right)}{\Gamma\left(\frac{1}{2}+\lambda\right)}\left(2\log(x)-2\log(2\pi)+3\gamma+\psi\left(\frac{1}{2}\right)+\psi\left(\frac{1}{2}+\lambda+\mu\right)-\psi\left(\frac{1}{2}+\lambda\right)\right),\nonumber\\
&=\frac{2^{\mu+\lambda-1}\Gamma\left(\frac{1}{2}+\lambda+\mu\right)}{\Gamma\left(\frac{1}{2}+\lambda\right)}\left(2\log(x)-2\log(4\pi)+2\gamma+\psi\left(\frac{1}{2}+\lambda+\mu\right)-\psi\left(\frac{1}{2}+\lambda\right)\right),
\end{align}
where we used the fact $\psi\left(\frac{1}{2}\right)=-2\log(2)-\gamma$. Now substitute the limit evaluation \eqref{limit} in \eqref{limitbefore} to get \eqref{nu=0eqn}.
\end{proof}

\begin{proof}[Corollary \textup{\ref{nu=0ofmuknu}}][]
Let $\mu=0$ in Theorem \ref{nu=0} and let $\lambda\to0$ and then use the fact ${}_0K_0(x,0)=K_0(x)$ and then use ${}_1F_0\left(\frac{1}{2};-;-x\right)=\frac{1}{\sqrt{1+x}}$ to get the result.
\end{proof}

\begin{center}
\textbf{Acknowledgements}
\end{center}
The author sincerely thanks Professor Atul Dixit for a careful reading of this article, his valuable suggestions and his support throughout this work. The 
author is an institute postdoctoral fellow at IIT Gandhinagar and sincerely thanks the institute and Professor Atul Dixit for their financial support.

\end{document}